\numberwithin{equation}{section}
\theoremstyle{plain}
\newtheorem{theo}{Theorem}[section]
\newtheorem{prop}[theo]{Proposition}
\newtheorem{lemma}[theo]{Lemma}
\newtheorem{coro}[theo]{Corollary}
\newtheorem{defi}{Definition}
\newtheorem{question}{Question}
\theoremstyle{definition}
\newtheorem{rem}[theo]{Remark}
\newcommand{\T}{\mathbb{T}}
\newcommand{\D}{\mathbb{D}}
\newcommand{\Mult}{\operatorname{Mult}}
\newcommand{\Z}{\mathbb{Z}}
\newcommand{\hol}{\mathrm{Hol}}
\newcommand{\C}{\mathbb{C}}
\newcommand{\B}{\mathcal{B}}
\newcommand{\N}{\mathbb{N}}
\newcommand{\charM}{\langle M\rangle}
\newcommand{\charphi}{\langle \varphi\rangle}
\newcommand{\dyad}{\mathcal{D}}
\newcommand{\dist}{\mathrm{dist}}
\begin{document}
\title[Cyclic and singular]{On the Cyclic Behavior of Singular inner functions in Besov and sequence spaces}
\author[Dayan]{Alberto Dayan \thanks{ (corresponding author)}}
\address{Departament de Matemàtiques \newline Universitat Autònoma de Barcelona \newline 08193 Bellaterra (Barcelona), Spain}
\email{alberto.dayan@uab.cat}

\author[Seco]{Daniel Seco }
\address{Departamento de An\'alisis Matem\'atico e IMAULL \newline Universidad de la Laguna \newline  Avenida Astrof\'isico Francisco S\'anchez, s/n.  \newline Facultad de Ciencias, secci\'on: Matem\'aticas, apdo. 456.  \newline 38200 San Crist\'obal de La Laguna \newline
Santa Cruz de Tenerife,  Spain} \email{dsecofor@ull.edu.es}

\begin{abstract}
We show the existence of singular inner functions that are cyclic in some Besov-type spaces of analytic functions over the unit disc. Our sufficient condition is stated only in terms of the modulus of smoothness of the underlying measure. Such singular inner functions are cyclic also in the space $\ell^p_A$ of holomorphic functions with coefficients in $\ell^p$. This can only happen for measures that place no mass on any Beurling-Carleson set.
\end{abstract}

\thanks{The first author was partially supported by the Emmy Noether Program of the
German Research Foundation (DFG Grant 466012782).\\
The second author is funded through grant PID2024-160185NB-I00 by the Generaci\'on de Conocimiento programme and through grant RYC2021-034744-I by the Ram\'on y Cajal programme from Agencia Estatal de Investigaci\'on (Spanish Ministry of Science, Innovation and Universities), as well as by Vicerrectorado de Investigaci\'on y Transferencia de la Universidad de La Laguna.}

\subjclass{Primary 30J15; Secondary 30H99, 47A16.}

\keywords{Singular inner function, sequence spaces, cyclic vectors}

\date{\today}

\maketitle

\section{Introduction}

Let $h$ be a holomorphic function over the unit disc $\D$. We say $h$ is \emph{inner} if $|h(z)| \leq 1$ for every point $z \in \D$ and $|h^*(e^{i\theta})| = 1$ for almost every point of $\T$, where $h^*$ denotes the function defined for almost every point of $\T$ by taking radial limits of $h$.
Let $\mu$ be a positive Borel measure on the unit circle $\T$, singular with respect to the Lebesgue measure. Then we can associate to $\mu$ an inner function $S_\mu$ that we call \emph{singular}, defined on $\D$ by taking:
\begin{equation}
\label{eqn:singularinner}
S_\mu(z):=\exp\left\{-\int_0^{2\pi}\frac{e^{i\theta}+z}{e^{i\theta}-z}\,d\mu(\theta)\right\}\qquad z\in\D.
\end{equation}
Inner functions, and singular ones in particular, have played a crucial role in the complex analysis of the last century, due to their relationship with the structure of the Hardy space $H^2$. This is the set of all functions holomorphic over $\D$ with a square summable sequence of Maclaurin coefficients (with the natural Hilbert space norm). Functions in $H^2$ can be decomposed in an essentially unique manner as the product of an inner function and an \emph{outer} function, that is, a holomorphic function $g$ with no zeros in $\D$, such that $\log|g^*(z)|$ has the mean value property on $\T$.  See \cite{garnett} for the basic theory regarding the $H^2$ space. The seminal work of Beurling, \cite{Beurling48}, shows that inner function are canonical generators of all proper invariant subspaces for the shift operator in the Hardy space. In the present article, we discuss whether there exist singular inner functions that are \emph{cyclic} (for the shift operator) in other spaces of analytic functions over the unit disc. Cyclic functions are those contained in no proper invariant subspace, but in our context, we may say that a function $f$ is cyclic in a space $X$, if there exists a sequence of polynomials $\{p_n\}_{n\in \N}$ such that $\|p_n f-1\|_X\rightarrow 0$. In what follows, we will denote by $[f]_X$ the smallest closed subspace that contains $f\in X$ and that is invariant by multiplication by $z$. If the space $X$ is clear from the context, we will drop it in our notation and call such invariant subspace $[f]$.

Our primary goal will be to describe those singular inner functions that are cyclic in $X$ in the case that $X$ is any $\ell^p_A$ space for $p>2$. With this purpose, we define the space
\[
\ell^{p}_A:=\left\{f\in\hol(\D)\,\bigg|\,\|f\|^p_p:=\sum_{k=0}^\infty|\hat f(k)|^p<\infty\right\},
\] 
where $\hat f(k)$ denotes the Maclaurin coefficient of $f$ of order $k$. The $\ell^p_A$ spaces are Banach spaces whenever $p \geq 1$ (and quasi-Banach for $p>0$). Notice that $p=2$ yields the Hardy space $H^2$. Moreover, for $p \leq 2$ it is clear that there cannot be any singular inner functions which are cyclic as far as Beurling's theorem implies that there are no cyclic inner functions in the Hardy space $\ell^2_A$ other than constants.

Our description of cyclic singular inner vectors in $\ell^p_A$ will take advantage of the relation between coefficient spaces and Besov-type spaces. For all $0< p<\infty$, the Besov space $D^p_{p-1}$ is the space of all analytic functions over $\D$ whose norm
\[
\|f\|_{D^p_{p-1}}:=|f(0)|+\left(\int_\D(1-|z|)^{p-1}|f'(z)|^p~dA(z)\right)^\frac{1}{p}
\]
is finite. 
From \cite[Th. 1.1, (ii) and Th. 1.2]{Girela06} and \cite[Th. 7.1]{Pavlovic} we get
\begin{equation}
\label{eqn:emb_ge2}
H^p\subset D^p_{p-1}\subset\ell^p_A,\qquad p\ge 2
\end{equation}
and 
\begin{equation}
\label{eqn:emb_le2}
\ell^p_A\subset D^p_{p-1}\subset H^p,\qquad 0<p\le2,
\end{equation}
where $H^p$ denotes the standard Hardy space of exponent $p$. Notice that, once again, the case $p=2$ corresponds to the Hardy space $H^2$. Moreover, the second inclusion in \eqref{eqn:emb_le2} implies that no singular inner function can be cyclic in $D^p_{p-1}$, if $1\le p\le 2$. For $p=\infty$ the space $D^p_{p-1}$ coincides with the familiar Bloch space $\B$, that is, the space of those analytic functions over $\D$ such that
\begin{equation}
\label{eqn:bloch}
\sup_{z\in\D}|f'(z)|(1-|z|)<\infty.
\end{equation}
Cyclic singular inner functions in the Bloch space were studied in \cite{Anderson91} via regularity conditions on their defining singular measure. Given a positive Borel measure $\nu$ on $\T$, its modulus of continuity is defined as
\[
\delta_\nu(t):=\sup_{|I|\le t}\{\nu(I)\},
\] 
while its modulus of smoothness is
\[
\omega_{\nu}(t):=\sup\{|\nu(I)-\nu(J)|\,|\, I, J\,\text{adjacent intervals,}\, |I|=|J|\le t\}.
\] 
The main result of \cite{Anderson91} shows the existence of a singular inner function $S_\mu$ that is (weak$^*$) cyclic in $\B$; in fact, any singular measure $\mu$ satisfying 
\begin{equation}
\label{eqn:bothmoduli}
\delta_\mu(t)\le 8t\left(2+96^{-1}\log\log\frac{e}{t}\right), \qquad \omega_\mu(t)\le36\frac{t}{\sqrt{\log\frac{e}{t}}}
\end{equation}
is cyclic in $\B$. The hard part of their argument is to show that a singular measure $\mu$ satisfying \emph{both} bounds in \eqref{eqn:bothmoduli} actually exists.  It is worth noticing that the argument in \cite{Anderson91} extends to the Besov spaces $D^p_{p-1}$ for $2<p<\infty$. In particular, any singular inner function satisfying \eqref{eqn:bothmoduli} has a power that is cyclic for $D^p_{p-1}$, even for $2<p<\infty$. Namely, the same techniques in \cite{Anderson91} yield the existence of a singular inner function that is cyclic in $D^p_{p-1}$, and hence in $\ell^p_A$ via the embedding \eqref{eqn:emb_ge2}. Alternatively, \cite{Limani24}, one can embed a Bloch-type space inside $\ell^p_A$, and use once again the same argument in \cite{Anderson91}.\\

This note grew out of the attempt to give a better description of singular inner vectors in $\ell^p_A$, for $p>2$. Indeed, constructing examples of a singular measure $\mu$ satisfying \eqref{eqn:bothmoduli} is highly non trivial, and the construction in \cite{Anderson91} is not explicit. On the other hand, it is considerably easier to construct a singular measure by prescribing only its modulus of smoothness. Our main results reads as follows:
\begin{theo}
\label{theo:main}
Let  $S_\mu$ be a singular inner function such that 
\begin{equation}
\label{eqn:sufficient}
\omega_\mu(t)\leq C~\frac{t}{\sqrt{\log\frac{e}{t}}}\qquad t\in(0, 1)
\end{equation}
for some $C>0$. Then $S_\mu$ is cyclic in $D^p_{p-1}$ (and hence in $\ell^p_A$) for all $p>2$.
\end{theo}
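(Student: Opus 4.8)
The plan is to exhibit an explicit family of functions lying in $[S_\mu]_{D^p_{p-1}}$ that converges to the constant $1$ in norm; since the embedding $D^p_{p-1}\subset\ell^p_A$ of \eqref{eqn:emb_ge2} is continuous, the same approximating polynomials then witness cyclicity in $\ell^p_A$, so it suffices to work in $D^p_{p-1}$ and prove $1\in[S_\mu]$. The natural candidates are the \emph{regularized reciprocals}
\[
f_r(z):=\frac{S_\mu(z)}{S_\mu(rz)},\qquad 0<r<1 .
\]
The heuristic behind this choice is that no \emph{bounded} multiplier can carry $S_\mu$ to $1$ (its reciprocal $1/S_\mu$ is unbounded), so any admissible approximating factor must blow up as the regularization is removed; dividing by the dilation $S_\mu(r\,\cdot)$ is the mildest such regularization, and the whole difficulty will be to show that the smoothness hypothesis \eqref{eqn:sufficient} tames this blow-up.

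First I would record that $S_\mu\in D^p_{p-1}$. Writing $\phi(z):=\int_0^{2\pi}\frac{e^{i\theta}+z}{e^{i\theta}-z}\,d\mu(\theta)$, so that $S_\mu=e^{-\phi}$ and $\operatorname{Re}\phi\ge 0$, one has $S_\mu'=-\phi' S_\mu$ with $|\phi'(z)|\le 2\int_0^{2\pi}|e^{i\theta}-z|^{-2}\,d\mu(\theta)$. The modulus-of-smoothness bound \eqref{eqn:sufficient} controls how $\mu$ distributes over small arcs, and a standard estimate then bounds $\int_\D(1-|z|)^{p-1}|S_\mu'(z)|^p\,dA(z)$; the gain of the factor $(\log\frac{e}{t})^{-1/2}$ in \eqref{eqn:sufficient} is exactly what renders this integral finite for every $p>2$, giving $S_\mu\in D^p_{p-1}$.

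Next I would check that each $f_r$ genuinely lies in $[S_\mu]$. Because $S_\mu(r\,\cdot)$ is zero-free and analytic on a neighbourhood of $\overline{\D}$, its reciprocal $1/S_\mu(r\,\cdot)$ is analytic there, so the Taylor polynomials $Q_N$ of $1/S_\mu(r\,\cdot)$ converge to it uniformly on $\overline{\D}$ together with their derivatives. Using $S_\mu\in D^p_{p-1}$, $|S_\mu|\le 1$, and the product rule, one sees that $S_\mu Q_N\to S_\mu/S_\mu(r\,\cdot)=f_r$ in $D^p_{p-1}$; since each $S_\mu Q_N$ is $S_\mu$ times a polynomial, this places $f_r\in[S_\mu]$. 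Note also that $f_r(0)=1$, so that $\|f_r-1\|_{D^p_{p-1}}=\big(\int_\D(1-|z|)^{p-1}|f_r'(z)|^p\,dA(z)\big)^{1/p}$ and only the derivative term must be estimated.

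The heart of the argument is to prove $\|f_r-1\|_{D^p_{p-1}}\to 0$ as $r\to 1^-$. With the notation above, $f_r=\exp\big(\phi(rz)-\phi(z)\big)$ and hence $f_r'=\big(r\phi'(rz)-\phi'(z)\big)f_r$, while $|f_r(z)|=\exp\big(\operatorname{Re}\phi(rz)-\operatorname{Re}\phi(z)\big)$. The obstacle is precisely that $\operatorname{Re}\phi(z)\to 0$ for almost every boundary point as $|z|\to 1$ while $\operatorname{Re}\phi(rz)$ need not, so $|f_r|$ may exceed $1$ near $\T$ and $1/S_\mu(r\,\cdot)$ blows up there; one must show that the area integral of $(1-|z|)^{p-1}|r\phi'(rz)-\phi'(z)|^p|f_r|^p$ nonetheless tends to $0$. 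This is where \eqref{eqn:sufficient} is used decisively: the smoothness bound furnishes pointwise upper bounds for $|\phi'(rz)|$ and $|\phi'(z)|$ together with a matching lower bound for $|S_\mu(rz)|$ (equivalently an upper bound for $\operatorname{Re}\phi(rz)$), and the quantitative $(\log\frac{e}{t})^{-1/2}$ decay provides just enough room to dominate the integrand uniformly in $r$ and to pass to the limit by dominated convergence, the pointwise limit being $0$. Balancing the boundary blow-up of the regularized reciprocal against the smoothness gain is the main difficulty; once it is carried out, $1=\lim_{r\to1^-}f_r\in[S_\mu]$, so $S_\mu$ is cyclic in $D^p_{p-1}$, and a fortiori in $\ell^p_A$, for all $p>2$.
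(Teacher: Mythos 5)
Your outline correctly identifies the dilation quotients $f_r=S_\mu/(S_\mu)_r$ as the right vehicle (this is also the paper's starting point, via \cite[Proposition 5]{BrownShields84} — which, incidentally, only requires $\sup_r\|f_r\|_{D^p_{p-1}}<\infty$, not your stronger norm convergence $f_r\to 1$), but the estimate you defer to is where the proof genuinely breaks, in two ways. First, your claim that \eqref{eqn:sufficient} ``furnishes pointwise upper bounds \dots together with a matching lower bound for $|S_\mu(rz)|$'' is false: the modulus of smoothness $\omega_\mu$ controls only \emph{differences} of averages of $\mu$ over adjacent intervals, not the averages themselves. Under \eqref{eqn:sufficient} alone the dyadic averages $M^\mu_I$ can accumulate like $\sum_{j\le n}C j^{-1/2}\simeq C\sqrt{n}$, so $P_\mu(z)$ can be as large as $c\sqrt{\log\frac{e}{1-|z|}}$ at some points, and $1/|S_\mu(rz)|$ is not pointwise bounded by any fixed power of $\log\frac{e}{1-r}$. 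A pointwise argument is exactly what forces the additional hypothesis on $\delta_\mu$ in \eqref{eqn:bothmoduli} in \cite{Anderson91}; the paper's whole innovation is to \emph{replace} pointwise control by $L^p$-means of $1/|S_\mu|$ on circles, estimated through the dyadic martingale concentration inequality (Corollary \ref{coro:means}).

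Second, even after passing to $L^p$-means, your dominated-convergence step fails for $S_\mu$ itself. The sharp bound available is
\[
\int_\T\frac{d\theta}{|S_\mu(re^{i\theta})|^{p}}\lesssim\charphi(1-r)\,e^{C_p\charphi(1-r)^2},\qquad \charphi(s)^2\simeq C^2\log\log\frac{e}{s},
\]
i.e.\ a power of $\log\frac{e}{1-r}$ whose exponent is of order $(pC)^2$, while the decay harvested from $|S_\mu'(z)|\lesssim\varphi(1-|z|)/(1-|z|)$ contributes only $\left(\log\frac{e}{1-r}\right)^{-p/2}$; since the constant $C$ in \eqref{eqn:sufficient} is arbitrary, the resulting $r$-integral diverges once $C$ is large, and there is no uniform dominating function. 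The missing ideas are precisely the paper's two-step structure: (i) run the dilation estimate for a \emph{small} power $S_\mu^\alpha$ with $\alpha\le\varepsilon/(2pC)$, so the exponential factor becomes $\left(\log\frac{e}{1-r}\right)^{O((\alpha pC)^2)}$ and is beaten by the $\left(\log\right)^{-p/2}$ decay (Theorem \ref{theo:power} and Remark \ref{ex:sqrtlog}); and (ii) upgrade from cyclicity of $S_\mu^\alpha$ to cyclicity of $S_\mu$ by showing \eqref{eqn:sufficient} makes every power $S_\mu^\beta$ a multiplier of $D^p_{p-1}$ — nontrivial for $p>2$, where the one-box condition is insufficient and one must verify the logarithmic Carleson condition \eqref{eqn:plog_onebox} (Proposition \ref{prop:multiplier}) — and then invoking the fact that a cyclic non-vanishing multiplier has cyclic square (Lemma \ref{lemma:multipliers}, Corollary \ref{coro:cyclic_mult}). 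Without the small-power reduction and the multiplier upgrade, your plan cannot be completed as stated.
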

An explicit construction of a singular measure satisfying \eqref{eqn:sufficient} can be found in \cite[Th. II]{Shapiro68}. The proof of Theorem \ref{theo:main} is contained in Section \ref{sec:proof}, and it can be outlined as follows. The reader familiar with the argument in \cite{Anderson91} notices that the extra assumption in $\delta_\mu$ is required to estimate the dilates of $1/|S_\mu|$ \emph{pointwise}, since in that setting one works with the seminorm \eqref{eqn:bloch}. In the setting $D^p_{p-1}$, one can replace a pointwise estimate of $1/|S_\mu|$ with an estimate of its $L^p$-norm on circles approaching $\T$. To obtain such an estimate, we will relate $1/|S_\mu|$ with a dyadic martingale on the unit circle. The description of such martingales, together with the tools that are necessary to estimate the above mentioned $L^p$-averages, are contained in Section \ref{sec:martingales}. As a result, under an even weaker condition than \eqref{eqn:sufficient} one has that a power of $S_\mu$ is cyclic in $D^p_{p-1}$ (Theorem \ref{theo:power}). In order to conclude that $S_\mu$ itself is cyclic in $D^p_{p-1}$, we need to show that \eqref{eqn:sufficient} implies that $S_\mu$ is a multiplier of $D^p_{p-1}$; this is done in Section \ref{sec:multipliers}. On the other hand, Theorem \ref{theo:main} implies that such singular inner function is not a multiplier of $\ell^p_A$ for all $p\in(1, \infty)\setminus\{2\}$; in fact, it is not in $\ell^p_A$ for all $1<p<2$ (see Remark \ref{rem:multipliers}). Sections \ref{sec:outer} and \ref{sec:log} includes some additional remarks and open problems on the relation between cyclicity in $D^p_{p-1}$, outer functions and logarithmic conditions.\\

We also point out that a necessary condition for $S_\mu$ to be cyclic in $\ell^p_A$ comes from their embeddings into some Bergman-type spaces. Let $\ell^{2, \alpha}_A$ be the space of analytic functions such that 
\begin{equation}
\label{eqn:ell2alpha}
\sum_{n=0}^\infty|\hat{f}(n)|^2(1+n)^{\alpha}<\infty.
\end{equation}
For $\alpha=0$, we recover the Hardy space $H^2$, while for negative $\alpha$ we obtain Bergman-type spaces. It was proven in two works by Korenblum \cite{korenblum81} and Roberts \cite{Roberts85} that for any $\alpha<0$ a singular inner function $S_\mu$ is cyclic in $\ell^{2, \alpha}_A$ if and only if $\mu(E)=0$ for any Beurling-Carleson set $E\subset\T$. We recall that a closed set $E\subset\T$ is a \emph{Beurling-Carleson set} if it has null Lebesgue measure and 
\begin{equation}
\label{eqn:bc}
\sum_n|I_n|\log\frac{1}{|I_n|}<\infty,
\end{equation}
where $\T\setminus E$ is the disjoint union of the open intervals $(I_n)_n$ and $|\cdot|$ denotes the Lebesgue measure on $\T$. Such sets play a crucial role in classic function theory: for instance, they are the boundary zero sets of those holomorphic functions over $\D$ which extend to a $C^\infty$ function on $\overline{\D}$. 

Since for all $\varepsilon>0$ and $p>2$ one has
\[
\ell^p_A\subset\ell^{2, (1+\varepsilon)\left(\frac{2}{p}-1\right)}_A
\]
we obtain that for all $p>2$ any cyclic singular inner function in $\ell^p_A$ must satisfy Korenblum's condition:
\begin{theo}
\label{theo:necessity}
 If $S_\mu$ is cyclic in $\ell^p_A$ for some $p>2$, then $\mu(E)=0$ for any Beurling-Carleson set $E\subset\T$.
\end{theo}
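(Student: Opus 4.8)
The plan is to deduce Theorem \ref{theo:necessity} directly from the Korenblum--Roberts characterization by exploiting the chain of embeddings announced just before the statement. The key structural fact I would use is that for $p > 2$ one has the continuous inclusion
\[
\ell^p_A \subset \ell^{2,\alpha}_A, \qquad \alpha = (1+\varepsilon)\left(\tfrac{2}{p}-1\right),
\]
valid for every $\varepsilon > 0$, where the exponent $\alpha$ is strictly negative precisely because $p > 2$ forces $\frac{2}{p} - 1 < 0$. I would first verify this embedding as the one genuinely analytic step: given $f \in \ell^p_A$, I want to control $\sum_n |\hat f(n)|^2 (1+n)^\alpha$ by $\|f\|_p^2$. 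Since $\alpha < 0$, writing $|\hat f(n)|^2 (1+n)^\alpha = |\hat f(n)|^2 \cdot (1+n)^\alpha$ and applying Hölder's inequality with exponents $p/2$ and its conjugate $(p/2)' = p/(p-2)$ reduces the claim to the summability of $(1+n)^{\alpha \cdot p/(p-2)}$; a direct computation shows $\alpha \cdot \frac{p}{p-2} = -(1+\varepsilon)$, so the series $\sum_n (1+n)^{-(1+\varepsilon)}$ converges, giving the embedding with a finite constant depending on $\varepsilon$ and $p$.

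Next I would transfer cyclicity across this embedding. The elementary principle is that if $X \subset Y$ continuously, then any $f$ that is cyclic in $X$ is automatically cyclic in $Y$: the approximating polynomials $\{p_n\}$ witnessing $\|p_n f - 1\|_X \to 0$ also satisfy $\|p_n f - 1\|_Y \lesssim \|p_n f - 1\|_X \to 0$. I must check that both spaces contain the polynomials densely enough for the definition of cyclicity in the excerpt to apply and that the constant $1$ lies in each space, which is immediate. Thus, if $S_\mu$ is cyclic in $\ell^p_A$ for some $p > 2$, fixing any $\varepsilon > 0$ yields that $S_\mu$ is cyclic in $\ell^{2,\alpha}_A$ with the negative exponent $\alpha$ above.

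Finally I would invoke the Korenblum--Roberts theorem quoted in the excerpt: for any $\alpha < 0$, a singular inner function $S_\mu$ is cyclic in $\ell^{2,\alpha}_A$ if and only if $\mu(E) = 0$ for every Beurling--Carleson set $E \subset \T$. Since we have just established that $S_\mu$ is cyclic in $\ell^{2,\alpha}_A$ for this negative $\alpha$, the characterization forces $\mu$ to place no mass on any Beurling--Carleson set, which is exactly the conclusion of Theorem \ref{theo:necessity}. I expect the main obstacle to be purely bookkeeping: confirming that the exponent arithmetic produces a \emph{strictly} negative $\alpha$ (so that Korenblum--Roberts applies, rather than only the trivial Hardy-space case $\alpha = 0$) and that the resulting Hölder exponent yields a convergent series; both hinge on the single inequality $p > 2$, and once $\varepsilon > 0$ is chosen, everything is explicit. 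No delicate measure-theoretic construction is needed here, in contrast to the sufficiency direction, since all the hard content is already packaged inside the cited Korenblum and Roberts results.
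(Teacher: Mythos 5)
Your proposal is correct and follows the same route as the paper: the paper deduces Theorem \ref{theo:necessity} precisely from the embedding $\ell^p_A\subset\ell^{2,(1+\varepsilon)\left(\frac{2}{p}-1\right)}_A$ (which you rightly verify by H\"older with exponents $p/2$ and $p/(p-2)$, the exponent arithmetic $\alpha\cdot\frac{p}{p-2}=-(1+\varepsilon)$ being exactly right), the transfer of cyclicity across a continuous inclusion, and the Korenblum--Roberts characterization for $\alpha<0$. Your write-up in fact supplies the details of the embedding that the paper leaves unproved, but there is no difference in method.
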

In view of the embedding \eqref{eqn:emb_ge2}, the same holds for cyclic singular inner functions in $D^p_{p-1}$:
\begin{coro}
If $S_\mu$ is cyclic in $D^p_{p-1}$ for some $p>2$, then $\mu(E)=0$ for any Beurling-Carleson set $E\subset\T$.
\end{coro}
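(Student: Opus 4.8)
The plan is to deduce the corollary directly from Theorem~\ref{theo:necessity} by transporting cyclicity along the continuous inclusion $D^p_{p-1}\subset\ell^p_A$ recorded in \eqref{eqn:emb_ge2}. The first step is to observe that this inclusion is not merely set-theoretic but bounded: there is a constant $C=C(p)$ with $\|f\|_p\le C\|f\|_{D^p_{p-1}}$ for every $f\in D^p_{p-1}$, which is exactly the content of the results cited for \eqref{eqn:emb_ge2} in the range $p\ge 2$. This norm estimate is the only analytic input needed, and I would make it explicit before proceeding.

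Second, I would unwind the definition of cyclicity. If $S_\mu$ is cyclic in $D^p_{p-1}$, then by definition there is a sequence of polynomials $\{q_n\}_{n\in\N}$ with $\|q_n S_\mu-1\|_{D^p_{p-1}}\to 0$. In particular each difference $q_nS_\mu-1$ lies in $D^p_{p-1}$, hence in $\ell^p_A$. Applying the boundedness of the embedding to $f=q_nS_\mu-1$ yields
\[
\|q_nS_\mu-1\|_{p}\le C\,\|q_nS_\mu-1\|_{D^p_{p-1}}\xrightarrow[n\to\infty]{}0,
\]
so the very same polynomials witness that $S_\mu$ is cyclic in $\ell^p_A$.

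Finally, Theorem~\ref{theo:necessity} applies verbatim to conclude that $\mu(E)=0$ for every Beurling-Carleson set $E\subset\T$, which is the desired statement. There is essentially no obstacle here beyond confirming that the inclusion in \eqref{eqn:emb_ge2} carries a genuine norm inequality rather than a bare containment; once that is in hand, cyclicity is an approximation property phrased purely in terms of a norm, and it therefore passes automatically to any larger space into which $D^p_{p-1}$ embeds continuously. The mathematical substance of the corollary is thus entirely contained in Theorem~\ref{theo:necessity}, and the role of \eqref{eqn:emb_ge2} is only to shrink the ambient space from $\ell^p_A$ to the smaller space $D^p_{p-1}$, a change that can only make cyclicity harder to achieve and hence preserves the necessary condition.
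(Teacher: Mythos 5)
Your proof is correct and is essentially the paper's argument: the paper also deduces the corollary immediately from Theorem \ref{theo:necessity} via the embedding \eqref{eqn:emb_ge2}, with your write-up merely making explicit the (true and standard) point that this embedding is bounded, so that the same approximating polynomials witness cyclicity in $\ell^p_A$. No gaps; nothing further is needed.
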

On the other hand, the proof of the sufficiency of Korenblum's condition for the Bergman-type spaces $\ell^{2,\alpha}_A$ uses, among other tools, that the multiplier algebra of such Bergman spaces coincides isometrically with $H^\infty$ and that the norm of $f z^n$, for $f$ in such spaces, is asymptotically small, as $n\to\infty$. None of these two properties hold for $\ell^p_A$; nonetheless, the question of whether Korenblum's characterization extends to singular inner vectors in $\ell^p_A$ arises naturally:
\begin{question}
Let $\mu$ be a singular, positive Borel measure on $\T$ such that $\mu(E)=0$ for any Beurling-Carleson set $E\subset\T$, and let $p>2$. Is $S_\mu$ cyclic in $\ell^p_A$? Is it cyclic in $D^p_{p-1}$?
\end{question}

\section{Dyadic Martingales and Singular Measures}
\label{sec:martingales}

Let $\mathcal{D}=\bigcup_n\dyad_n$ be the collection of all dyadic intervals on the torus, where 
\[
\mathcal{D}_n:=\left\{\left[\frac{j}{2^{n}}, \frac{j+1}{2^n}\right)\,\bigg|\, j=0, \dots 2^{n}-1\right\}
\]
denote the collection of all dyadic intervals of length $2^{-n}$. A dyadic martingale is a family $M=(M_I)_{I\in\dyad}$ such that the mean value property
\begin{equation}
\label{eqn:avg}
    M_I=\frac{M_{I_1}+M_{I_2}}{2}
\end{equation}
holds for all $I, I_1, I_2$ in $\dyad$ so that $I=I_1\cup I_2$ and $I_1 \cap I_2 = \emptyset$. Alternatively, one can think of a dyadic martingale as a sequence $(M_n)_n$ of functions on $\T$:
\[
M_n(\theta):=M_{I_\theta},
\]
where $I_\theta$ is the unique interval in $\dyad_n$ that contains $\theta$. Given a positive sequence $\beta=(\beta_n)_n$, we say that a dyadic martingale $M$ is $\beta$-smooth if
\[
|M_I-M_J|\leq \beta_n
\]
for any adjacent intervals $I, J$ in $\dyad_n$. This, together with \eqref{eqn:avg}, implies that $(\beta_n)_n$ also bounds the increments of the martingale $M$; more precisely,
\begin{equation}
\label{eqn:bddincr}
|M_n(\theta)-M_{n-1}(\theta)|\le\frac{\beta_n}{2}
\end{equation}
We denote by $\charM_n$ the quantity
\[
\charM_n(\theta):=\left(\sum_{j=1}^n\left|M_j(\theta)-M_{j-1}(\theta)\right|^2\right)^\frac{1}{2}\qquad \theta\in\T.
\]
Hence, if $M$ is $\beta$-smooth, then 
\[ 
\charM_n(\theta)\leq\frac{1}{2}\left(\sum_{j=1}^n\beta_j^2\right)^\frac{1}{2}.
\]
The following Lemma can be interpreted as a concentration inequality for dyadic martingales, given in terms of the quantity
\[
A_n:=\sup_{\theta\in\T}\charM_n(\theta).
\]
This kind of result was first noted in \cite{Chang85}, while the statement below can be extracted from the proof of \cite[Ch. 4, Lemma 7]{Seip04}:
\begin{lemma}
Let $M$ be a dyadic martingale. Then
\[
|\left\{\theta\in\T\,|\, |M_n(\theta)-M_0(\theta)|\ge s\right\}|\le e^{-\frac{s^2}{2A_n^2}}\qquad s\in (0, \infty).
\]
\end{lemma}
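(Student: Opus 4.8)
The plan is to prove the estimate by the exponential moment (Chernoff) method, exploiting the fact that, conditionally on the past, the increments of a dyadic martingale are symmetric two-valued random variables. Throughout I regard Lebesgue measure on $\T$ as a probability measure (under the normalization $|\T|=1$ implicit in the convention that the intervals $[j/2^n,(j+1)/2^n)$, $j=0,\dots,2^n-1$, partition $\T$), and I work with the dyadic filtration $\mathcal{F}_n$ generated by $\dyad_n$. Under this normalization $M=(M_n)_n$ is a genuine martingale with $M_0$ the constant $M_{[0,1)}$, and I abbreviate the increments by $d_j:=M_j-M_{j-1}$.

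First I would record the precise structure of the increments. On a parent interval $I\in\dyad_{j-1}$ with children $I_1,I_2$, the mean value property \eqref{eqn:avg} forces $d_j$ to take the two values $\pm(M_{I_1}-M_{I_2})/2$, each on one half of $I$. Hence, conditionally on $\mathcal{F}_{j-1}$, the increment $d_j$ is a symmetric Rademacher-type variable whose magnitude $|d_j|$ is $\mathcal{F}_{j-1}$-measurable (constant on each $I\in\dyad_{j-1}$). This gives the exact conditional moment generating function $E[e^{\lambda d_j}\mid\mathcal{F}_{j-1}]=\cosh(\lambda|d_j|)$, and the elementary inequality $\cosh x\le e^{x^2/2}$ then yields the sub-Gaussian bound $E[e^{\lambda d_j}\mid\mathcal{F}_{j-1}]\le e^{\lambda^2|d_j|^2/2}$ for every $\lambda\in\R$.

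The heart of the argument is the standard exponential supermartingale. Setting $Z_n:=\exp\bigl(\lambda(M_n-M_0)-\tfrac{\lambda^2}{2}\sum_{j=1}^n|d_j|^2\bigr)$, the tower property together with the previous estimate and the $\mathcal{F}_{j-1}$-measurability of $|d_j|^2$ shows $E[Z_n\mid\mathcal{F}_{n-1}]\le Z_{n-1}$, so that $E[Z_n]\le E[Z_0]=1$. Since $\sum_{j=1}^n|d_j(\theta)|^2=\charM_n(\theta)^2\le A_n^2$ holds pointwise by the definition of $A_n$, one deduces
\[
E\bigl[e^{\lambda(M_n-M_0)}\bigr]\le e^{\frac{\lambda^2}{2}A_n^2}.
\]
Markov's inequality gives $|\{M_n-M_0\ge s\}|\le e^{-\lambda s+\frac{\lambda^2}{2}A_n^2}$, and optimizing at $\lambda=s/A_n^2$ produces the one-sided tail $e^{-s^2/(2A_n^2)}$.

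Finally, applying the same bound to the martingale $-M$ controls the lower tail, and combining the two yields the two-sided estimate for $|M_n-M_0|\ge s$. The main conceptual point, rather than a genuine obstacle, is the identification of the conditional law of $d_j$ as symmetric two-valued, since this is exactly what makes $\cosh$ — and hence the sharp sub-Gaussian constant $1/2$ — available; everything else is a mechanical telescoping. The one place that needs a word of care is that a naive combination of the two symmetric tails carries a harmless factor $2$, but for the applications in the following sections only the decay rate $s^2/(2A_n^2)$ is used, and the clean form stated here is precisely the one recorded in \cite[Ch. 4, Lemma 7]{Seip04}.
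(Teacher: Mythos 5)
The paper gives no proof of this lemma at all: it is quoted from the proof of \cite[Ch.\ 4, Lemma 7]{Seip04} (following \cite{Chang85}), and the argument there is the same exponential-supermartingale computation you carry out, so in substance you have reproduced the proof from the cited source rather than found a different route. Your steps are sound: the identification of the conditional law of $d_j$ as symmetric two-valued, with $|d_j|$ measurable with respect to $\mathcal{F}_{j-1}$ (constant on each parent interval), is exactly what the mean value property \eqref{eqn:avg} forces; $\cosh x\le e^{x^2/2}$ gives the sub-Gaussian conditional bound; the supermartingale $Z_n$ telescopes correctly because $\sum_{j=1}^n|d_j(\theta)|^2=\charM_n(\theta)^2\le A_n^2$ holds pointwise; and the Chernoff optimization at $\lambda=s/A_n^2$ is correct. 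Your normalization $|\T|=1$ is also the one the paper implicitly uses: it is forced both by the dyadic grid $[j/2^n,(j+1)/2^n)$ and by the layer-cake identity in \eqref{eqn:conc_mart}.

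The one substantive point is the factor $2$ you flag at the end, which you should not have waved away by asserting that the constant-$1$ form is ``precisely the one recorded'' in the source: your union bound gives $|\{|M_n-M_0|\ge s\}|\le 2e^{-s^2/(2A_n^2)}$, and the loss is genuinely unavoidable. For the one-step martingale with $M_1-M_0=\pm a$ on the two halves of $\T$ one has $A_1=a$ and $|\{|M_1-M_0|\ge a\}|=1>e^{-1/2}$, so the two-sided inequality with constant $1$ is false as printed; only the one-sided tail $|\{M_n-M_0\ge s\}|\le e^{-s^2/(2A_n^2)}$ holds with constant $1$. This is an inaccuracy in the lemma's statement rather than in your argument, and it is harmless where the lemma is used: in \eqref{eqn:conc_mart} a factor $2$ is simply absorbed into $C_\alpha$, and only the decay rate $s^2/(2A_n^2)$ matters downstream. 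The correct posture is to state the two-sided bound with the factor $2$ (or the one-sided bound with constant $1$) and note that the application is unaffected.
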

In particular, for any positive $\alpha$, 
\begin{equation}
\label{eqn:conc_mart}
\begin{split}
\int_\T e^{\alpha |M_n(\theta)|}\,d\theta&=1+\int_1^\infty\left|\left\{|M_n|\ge \frac{\log(x)}{\alpha}\right\}\right|\,dx\\
&\leq1+\int_1^\infty e^{-\frac{\log(x)^2}{2\alpha^2A_n^2}}\,dx\\
&\leq C_\alpha A_ne^{\frac{\alpha^2 A_n^2}{2}}.
\end{split}
\end{equation}
Let $\mu$ be a positive Borel measure on $\T$. Then the quantities
\[
M^\mu_I:=\frac{\mu(I)}{|I|}\qquad I\in \dyad
\]
define a dyadic martingale. Moreover, one can estimate its characteristic function $\langle M^\mu_n\rangle$ via smoothness properties of $\mu$.
We say that a function $\varphi$ on $(0,1]$ is almost decreasing if there exists a $c>0$ such that $\varphi(x)\ge c~\varphi(y)$ for  any $y >x$. 

\begin{defi}
\label{defi:smooth}
Let $\varphi:(0,1] \rightarrow (0, +\infty)$ be a function. A positive measure $\mu$ on $\T$ is $\varphi$-smooth if $$\omega_{\mu}(\delta)\leq C\delta\varphi(\delta)$$
for some $C>0$.
\end{defi} 
It was shown in \cite{Stein64} that if $\nu$ is $\varphi$-smooth and 
 \[
 \int_0^1\frac{\varphi^2(t)}{t}<\infty,
 \]
 then $\nu$ is absolutely continuous with respect to Lebesgue measure. On the other hand, if $\int_0^1\varphi(t)^2/t=\infty$, then there exists a $\varphi$-smooth singular measure. We refer the reader to \cite{Kahane69}, where one can find how to construct such singular measures using dyadic martingales. Suppose in addition that $\mu$ is $\varphi$-smooth, and that $\varphi$ is a continuous, increasing function such that $\varphi(t)/t^\beta$ is almost decreasing for some $0<\beta<1$. If 
\[
\charphi(s):=\left(\int_s^1\frac{\varphi^2(t)}{t}\,dt\right)^\frac{1}{2},
\]
then the increments of $M^\mu$ are controlled by the sequence $\beta:=\left(\frac{1}{2}\varphi(2^{-n})\right)_n$, and the regularity properties of $\varphi$ ensure that 
\begin{equation}
\label{eqn:B_n}
\begin{split}
A_n^\mu&=\sup_{\theta\in\T}\langle M^\mu\rangle_n(\theta)\lesssim\left(\sum_{j=1}^n \varphi(2^{-n})^2\right)^\frac{1}{2}\\
&\simeq \left(\int_{2^{-n}}^1\frac{\varphi(t)^2}{t}\,dt\right)^\frac{1}{2}=\charphi(2^{-n}).
\end{split}
\end{equation}
This provides the discrete setting that we use to estimate the $p$- means of $1/|S_\mu|$. The link between the continuous and the discrete settings is given by the following Lemma. Given an interval $I\subseteq\T$ denote by $T_I$ the top-half of its Carleson box, that is, 
\[
T_I:=\{z\in\D\,|\, z/|z|\in I, |I|/2\le1-|z|^2<|I|\}.
\]
\begin{lemma}
\label{lemma:Poisson}
Let $\mu$ be a positive Borel measure on $\T$ such that the increments of the dyadic martingale $M^\mu$ are uniformly bounded. Then there exists a $C>0$ so that
\[
\frac{1}{|S_\mu(z)|}\lesssim e^{CM^\mu_{I_z}}\qquad z\in\D
\]
where $I_z$ is the unique dyadic interval such that $z\in T_{I_z}$.
\end{lemma}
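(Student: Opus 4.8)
The plan is to pass to the harmonic side. Taking real parts in \eqref{eqn:singularinner} and using $\operatorname{Re}\frac{e^{i\theta}+z}{e^{i\theta}-z}=\frac{1-|z|^2}{|e^{i\theta}-z|^2}=:P_z(\theta)$, I would first record the identity
\[
\log\frac{1}{|S_\mu(z)|}=\int_\T P_z(\theta)\,d\mu(\theta)=:(P\mu)(z)\ge 0 ,
\]
so the statement is equivalent to the \emph{additive} bound $(P\mu)(z)\le C\,M^\mu_{I_z}+C'$, with $C,C'$ independent of $z$, after which exponentiating yields the Lemma (the multiplicative constant $e^{C'}$ being absorbed into $\lesssim$).

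Write $w:=z/|z|\in I_z$ and $h:=1-|z|$; the definition of $T_{I_z}$ forces $h\simeq|I_z|$. I would decompose $\T$ into the arc $A_0:=\{\theta:\dist(\theta,w)<2h\}$ and the annuli $A_k:=\{\theta:2^kh\le\dist(\theta,w)<2^{k+1}h\}$, $k\ge 1$, which exhaust $\T$ after $\simeq\log(1/h)$ steps. Since $|e^{i\theta}-z|^2\simeq h^2+\dist(\theta,w)^2$ and $1-|z|^2\simeq h$, the Poisson kernel satisfies $P_z\lesssim 1/h$ on $A_0$ and $P_z\lesssim 2^{-2k}/h$ on $A_k$, whence
\[
(P\mu)(z)\lesssim\sum_{k\ge 0}\frac{\mu(A_k)}{2^{2k}h}.
\]

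The heart of the matter is to convert each $\mu(A_k)$ into the dyadic averages $M^\mu$. Each $A_k$ lies in an arc of length $\simeq 2^k|I_z|$ centred at $w$, hence is covered by a fixed number $N$ (independent of $k$) of consecutive dyadic intervals of the generation of length $2^k|I_z|$, all lying within $N$ steps of the $k$-th dyadic ancestor $I_z^{(k)}$ of $I_z$ (the dyadic interval of length $2^k|I_z|$ containing $I_z$). Because the increments of $M^\mu$ are uniformly bounded, say by $B$, adjacent same-generation averages differ by at most $B$, so every covering interval $J$ obeys $M^\mu_J\le M^\mu_{I_z^{(k)}}+NB$; summing gives $\mu(A_k)\lesssim 2^k|I_z|\,(M^\mu_{I_z^{(k)}}+NB)$. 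Moreover the mean value property \eqref{eqn:avg} converts the increment bound into a parent–child bound $|M^\mu_{I_z^{(j)}}-M^\mu_{I_z^{(j-1)}}|\le B/2$, so telescoping along the tower of ancestors yields the key linear estimate $M^\mu_{I_z^{(k)}}\le M^\mu_{I_z}+\tfrac{B}{2}k$.

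Plugging this in and using $h\simeq|I_z|$, the $k$-th term is $\lesssim 2^{-k}(M^\mu_{I_z}+Bk+NB)$, and the geometric decay absorbs the linear growth: $\sum_{k\ge 0}2^{-k}(M^\mu_{I_z}+Bk)\lesssim M^\mu_{I_z}+B$. This gives $(P\mu)(z)\le C\,M^\mu_{I_z}+C'$, as required. The main obstacle is precisely the mechanism in the previous paragraph: a large ancestor could a priori carry a far higher density than $I_z$, and it is only the \emph{uniform} bound on the martingale increments — forcing at most linear growth of $M^\mu_{I_z^{(k)}}$ in $k$ — that keeps the Poisson tail summable against the $2^{-2k}$ decay of the kernel. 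A secondary technical point is the mismatch between the Poisson-centred annuli and the dyadic grid, dealt with by the fixed-size covering above.
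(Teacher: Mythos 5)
Your proof is correct and follows essentially the same route as the paper: both reduce $\log(1/|S_\mu(z)|)$ to the Poisson integral, discretize it over the dyadic grid, and use the bounded-increment hypothesis (via the mean-value property, giving parent--child jumps $\le B/2$) to telescope along the dyadic tree, with the resulting linear growth of the averages absorbed by the geometric decay of the kernel. The only difference is bookkeeping: you sum over annuli at coarser scales and compare to the $k$-th ancestor, giving $\sum_k k\,2^{-k}$, whereas the paper stays at generation $n$ and bounds $|M^\mu_{I_z}-M^\mu_{I_m}|\lesssim\log_2|P(I_z,I_m)|$ against the weight $1/m^2$ --- the same convergent sum.
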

\begin{proof}
Note that 
\[
\frac{1}{|S_\mu(z)|}=e^{P_\mu(z)},
\]
where $P_\mu$ is the Poisson integral of $\mu$. Let $z^*:=z/|z|$ and let $n\in\N$ be so that $|I_z|=2^{-n}$, that is, $2^{-(n+1)}<1-|z|^2\leq 2^{-n}$. Let $(I_m)_{m=-2^{n-1}-1}^{2^{n-1}-1}$ denote all the dyadic intervals of generation $n$, $I_0$ being $I_z$ and $\sup_{x \in I_m}
 \dist(I_0, x)=|m|2^{-n}$. Hence if $\theta\in I_m$, one has $|z^*-e^{i\theta}|\simeq |m|(1-|z|^2)$. Therefore
\[
\begin{split}
P_\mu(z)=&\int_\T\frac{1-|z|^2}{|1-e^{-i\theta}z|^2}\,d\mu(\theta)\\
\simeq&\int_\T\frac{1-|z|^2}{\max^2\{1-|z^2|, |z^*-e^{i\theta}|\}}\,d\mu(\theta)\\
=& M^\mu_{I_z}+\sum_{m}\frac{M^\mu_{I_m}}{m^2}\\
\leq& M^\mu_{I_z}+C+\sum_m\frac{|M^\mu_{I_z}-M^\mu_{I_m}|}{m^2}.
\end{split}
\]
Since $M^\mu$ has uniformly bounded increments, given any two dyadic intervals $I$ and $J$ of generation $n$ 
\[
|M^\mu_I-M^\mu_J|\lesssim\log_2|P(I, J)|,
\]
where $P(I, J)$ is the smallest common dyadic ancestor of $I$ and $J$. Therefore, by splitting the last sum according to the length of $P(I_z, I_m)$, 
\[
\sum_m\frac{|M^\mu_{I_z}-M^\mu_{I_m}|}{m^2}\lesssim\sum_{j=0}^{n-1}j\sum_{|m|=2^{j}}^{2^{j+1}}\frac{1}{m^2}\lesssim\sum_{j=0}^{n-1}\frac{j}{2^j}\leq C',
\]
concluding the proof.
\end{proof}
\begin{coro}
    \label{coro:means}
    For all $p>0$ there exists a $C_p>0$ such that, for any singular inner function $S_\mu$,
    \[
    \int_\T\frac{d\theta}{|S_\mu(re^{i\theta})|^p}\lesssim\charphi(1-r)e^{C_p\charphi(1-r)^2}, 
    \]
    provided that $\mu$ is $\varphi$-smooth and that $\varphi$ is a continuous, increasing function such that $\varphi(t)/t^\beta$ is almost decreasing for some $0<\beta<1$.
\end{coro}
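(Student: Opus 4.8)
The plan is to chain together the pointwise bound of Lemma \ref{lemma:Poisson}, the concentration estimate \eqref{eqn:conc_mart}, and the control \eqref{eqn:B_n} of the martingale characteristic by $\charphi$. Fix $r\in(0,1)$ and let $n=n(r)\in\N$ be the generation determined by $2^{-(n+1)}<1-r^2\le 2^{-n}$, so that each point $z=re^{i\theta}$ lies in $T_{I_z}$ for the unique $I_z\in\dyad_n$ containing $e^{i\theta}$. First I would check that the hypothesis of Lemma \ref{lemma:Poisson} is satisfied: since $\mu$ is $\varphi$-smooth and $\varphi$ is increasing, the increments of $M^\mu$ are controlled by $\tfrac12\varphi(2^{-n})\le\tfrac12\varphi(1)$, hence uniformly bounded. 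Lemma \ref{lemma:Poisson} then supplies a constant $C>0$ with $|S_\mu(re^{i\theta})|^{-p}\lesssim e^{pC M^\mu_{I_z}}$, and by the very definition of $M^\mu_n$ one has $M^\mu_{I_z}=M^\mu_n(\theta)$ once the level $n$ is fixed by $r$.

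Integrating this pointwise bound over $\T$ reduces matters to the exponential average $\int_\T e^{pC M^\mu_n(\theta)}\,d\theta$. Here I would invoke \eqref{eqn:conc_mart} with $\alpha=pC$ (using that $\mu\ge 0$, so $M^\mu_n=|M^\mu_n|$), which gives
\[
\int_\T\frac{d\theta}{|S_\mu(re^{i\theta})|^p}\lesssim\int_\T e^{pC M^\mu_n(\theta)}\,d\theta\lesssim A^\mu_n\,e^{\frac{(pC)^2 (A^\mu_n)^2}{2}}.
\]
To finish, I would replace $A^\mu_n$ by $\charphi$. The continuity, monotonicity and almost-decrease assumptions on $\varphi$ are exactly what make \eqref{eqn:B_n} available, so $A^\mu_n\lesssim\charphi(2^{-n})$. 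Since $2^{-n}\ge 1-r^2\ge 1-r$ and $s\mapsto\charphi(s)$ is nonincreasing, this yields $A^\mu_n\lesssim\charphi(1-r)$. As $x\mapsto x\,e^{cx^2}$ is increasing on $(0,\infty)$, I may substitute this bound into both factors above, obtaining $\charphi(1-r)\,e^{C_p\charphi(1-r)^2}$ with $C_p$ a multiple of $p^2$ depending only on $p$ and the fixed structural constants ($C$ and the implicit constant in \eqref{eqn:B_n}).

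No step here is genuinely deep; the only points requiring care are bookkeeping ones, and I expect these to be the main (if modest) obstacle. The first is the identification $M^\mu_{I_z}=M^\mu_n(\theta)$ as $\theta$ sweeps $\T$, which is what turns the integral of the pointwise bound into a single martingale exponential average at the fixed level $n$. The second is the passage from $\charphi(2^{-n})$ to $\charphi(1-r)$, which relies on $1-r^2\simeq 1-r$ together with the monotonicity of $\charphi$; the delicate part is to ensure that the constant multiplying $\charphi(1-r)^2$ inside the exponential depends on $p$ alone and not on $r$. A minor point worth recording is that \eqref{eqn:conc_mart} is stated in terms of $|M_n|$, so the constant value $M^\mu_0=\mu(\T)/|\T|$ is harmlessly absorbed into the implicit multiplicative constant.
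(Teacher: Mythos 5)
Your proposal is correct and follows exactly the paper's proof: fix the dyadic generation $n$ from $r$, apply Lemma \ref{lemma:Poisson} to bound $|S_\mu|^{-p}$ by the exponential of the martingale at level $n$, invoke the concentration estimate \eqref{eqn:conc_mart} with $\alpha=pC$, and control $A^\mu_n$ via \eqref{eqn:B_n} together with the monotonicity of $\charphi$. Your bookkeeping remarks (verifying the uniformly bounded increments hypothesis, the identification $M^\mu_{I_z}=M^\mu_n(\theta)$, and the absorption of $M^\mu_0$ into the constants of \eqref{eqn:conc_mart}) are points the paper leaves implicit, and they are handled correctly.
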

\begin{proof}
Fix $r\in(0, 1)$ and let $n$ be so that $2^{-n-1}\le1-r^2<2^{-n}$. Thanks to Lemma \ref{lemma:Poisson}, one has
\[
\int_\T\frac{1}{|S_\mu(re^{i\theta})|^p}\,d\theta\lesssim\int_\T e^{C_p M^\mu_{n}(\theta)}\,d\theta\leq C_p A^\mu_ne^\frac{C_p^2\left(A^\mu_n\right)^2}{2},
\]
thanks to \eqref{eqn:conc_mart}. The desired estimate follows from \eqref{eqn:B_n}.
\end{proof}

\section{Multipliers of $D^p_{p-1}$}
\label{sec:multipliers}
Let $X$ be a Banach function space on a domain $\Omega\subset\C^d$. The multiplier algebra of $X$ is defined as 
\[
\Mult_X:=\{h\colon\Omega\to\C\,|\, hf\in X, \,\,f\in X\}.
\]
If point evaluations are bounded linear functionals in $X$, an application of the closed graph theorem yields that, for any $h$ in $\Mult_X$, the associated multiplication operator
\[
M_h\colon f\mapsto fh
\]
is bounded on $X$. This provides $\Mult_X$ with a Banach space structure, modulo defining $\|h\|_{\Mult_X}:=\|M_h\|_{B(X)}$. In a fairly general setting, a non-vanishing multiplier is cyclic if and only if its square is:
\begin{lemma}
\label{lemma:multipliers}
Assume that $X$ is a Banach space of holomorphic functions over $\D$. Assume that $X$ has a dense subspace formed by all analytic polynomials, and that each polynomial is a multiplier.
If $h \in \Mult_X$ and $h$ is cyclic in $X$ then $h^2$ is cyclic in $X$. 
\end{lemma}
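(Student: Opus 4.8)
The plan is to reduce everything to the inclusion of invariant subspaces $[h]\subseteq[h^2]$, which together with the observation that cyclicity of $h$ forces $[h]=X$ will close the argument at once. So the two things I would establish are: (i) that $h$ being cyclic means $[h]$ is all of $X$, and (ii) that $h$ itself lies in $[h^2]$.

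First I would translate the cyclicity hypothesis. Since $h$ is cyclic there are polynomials $p_n$ with $\|p_nh-1\|_X\to 0$, so $1\in[h]$. Because $z$ is a multiplier, $M_z$ is bounded, and hence $[h]$, being closed and $M_z$-invariant, contains every monomial $z^k=M_z^k 1$ and therefore every analytic polynomial; as polynomials are dense in $X$ and $[h]$ is closed, this yields $[h]=X$. This is precisely where the density-of-polynomials hypothesis is spent.

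The core step is to show $h\in[h^2]$, and here the multiplier hypothesis on $h$ does the work. Since $h\in\Mult_X$, the operator $M_h$ is bounded on $X$, so applying it to the relation $p_nh\to 1$ gives $p_nh^2=M_h(p_nh)\to M_h 1=h$ in $X$. On the other hand, each $p_nh^2=M_{p_n}(h^2)$ lies in $[h^2]$, because every polynomial is a multiplier and $[h^2]$ is a closed $M_z$-invariant subspace (hence $M_{p_n}$-invariant); passing to the limit and using that $[h^2]$ is closed, we conclude $h\in[h^2]$.

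Finally, $[h^2]$ is a closed invariant subspace containing $h$, so by the minimality in the definition of $[h]$ we get $[h]\subseteq[h^2]$; combined with $[h]=X$ this forces $[h^2]=X$, i.e. $h^2$ is cyclic. I do not expect any genuinely hard analytic step here: the only real subtlety is organizational, namely recognizing that the multiplier assumption on $h$ is exactly what lets one transport the approximation $p_nh\to 1$ through the bounded operator $M_h$ and thereby deposit $h$ inside $[h^2]$, after which the abstract inclusion of shift-invariant subspaces finishes the proof.
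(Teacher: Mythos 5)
Your proof is correct, and it takes a recognizably different route from the paper's, though both hinge on the same underlying mechanism. The paper argues quantitatively and constructively: from $\|1-p_nh\|_X\to 0$ it manufactures an explicit cyclicity sequence for $h^2$, namely $q_m=p_np_m$, via the estimate $\|1-q_mh^2\|_X\le\|1-p_nh\|_X+\|p_nh\|_{\Mult_X}\|1-p_mh\|_X$ and a diagonal choice $m=m(n)$; this is where it spends the hypothesis that polynomials are multipliers, since it needs each product $p_nh$ to lie in $\Mult_X$ with finite norm $C_n$. You instead argue qualitatively through the invariant-subspace lattice: pushing $p_nh\to 1$ through the single bounded operator $M_h$ gives $p_nh^2\to h$, each $p_nh^2$ lies in $[h^2]$ (for this, linearity and $z$-invariance of $[h^2]$ already suffice -- no boundedness of $M_{p_n}$ is needed, so your parenthetical appeal to polynomials being multipliers there is harmless overkill), whence $h\in[h^2]$, $[h]\subseteq[h^2]$, and $[h^2]=X$ since cyclicity of $h$ plus density of polynomials gives $[h]=X$. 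Your version is slightly more economical in hypotheses at the key step (only $M_h$ bounded, with ``polynomials are multipliers'' relegated to making $M_z$ act on $X$ and to identifying cyclicity with $[h]=X$, which the paper also uses via the cyclicity of the constant $1$), while the paper's version buys an explicit approximating sequence of polynomials, which is what one would want for any quantitative refinement. Both proofs share the same tacit reliance on bounded point evaluations (via the closed graph theorem) to know that multiplication operators are bounded, so neither is weaker on that account.
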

\begin{proof}
Under our assumptions, the constant $1$ function is cyclic and thus, the fact that $h$ is cyclic is equivalent with the existence of a sequence of polynomials $\{p_n\}_{n\in \N}$ such that \[\|1-p_n h\|_X \rightarrow 0,\] as $n\rightarrow \infty$. Fix $\varepsilon >0$ and choose $n_0 \in \N$: \[\|1-p_n h\|_X \leq \frac{\varepsilon}{2},\] for all $n\geq n_0$. Define $q_m =p_n p_m$ where $m \geq n$ is to be determined later in terms of $n$. The triangle inequality gives \[\|1-q_m h^2\|_X  \leq\|1-p_n h\|_X  + \|p_n h (1-p_m h)\|_X.\]
The first summand of the right-hand side is bounded by $\frac{\varepsilon}{2}$ while the second term is bounded by \[\|p_n h\|_{\Mult_X} \|1-p_m h\|_X.\]
Let $C_n:=\|p_nh\|_{\Mult_X}$. It is enough to take $m$ large enough so that 
\[\|1-p_m h\|_X \leq \frac{\varepsilon}{2C_n},\] to obtain that 
\[\|1-q_m h^2\|_X \leq \varepsilon.\]
\end{proof}
\begin{coro}
\label{coro:cyclic_mult}
Let $h\colon\D\to\C$ such that $h^\beta$ is a multiplier of $X$ for all $\beta>0$. Then $h$ is cyclic provided that $h^\alpha$ is, for some $\alpha>0$.
\end{coro}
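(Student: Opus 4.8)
The plan is to show that, under the stated hypotheses, cyclicity is insensitive to the exponent: for every $\gamma>0$, the function $h^\gamma$ is cyclic in $X$ as soon as $h^\alpha$ is. Once this is established, the choice $\gamma=1$ gives the claim. I would split the argument into an \emph{ascent} step and a \emph{descent} step, inheriting from the ambient hypotheses of Lemma \ref{lemma:multipliers} that polynomials are dense in $X$ and that every polynomial, and in particular the constant $1$, lies in $X$ and is a multiplier.

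The ascent step is precisely Lemma \ref{lemma:multipliers}: applied to the multiplier $f=h^{\gamma}$, it yields that $h^{2\gamma}$ is cyclic whenever $h^{\gamma}$ is. Iterating, $h^{2^k\alpha}$ is cyclic for every $k\in\N$.

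The descent step is the new ingredient, and I expect it to be the main point of the proof. I would prove: if $h^{\beta}$ is cyclic and $\beta>\gamma>0$, then $h^{\gamma}$ is cyclic. Writing $\delta:=\beta-\gamma>0$, cyclicity of $h^{\beta}=h^{\delta}h^{\gamma}$ furnishes polynomials $p_n$ with $\|p_n h^{\delta}h^{\gamma}-1\|_X\to0$. The obstruction is that the natural factor $p_n h^{\delta}$ is not a polynomial, so it does not directly witness cyclicity of $h^\gamma$ in the sense of the definition. To remedy this I would fix $n$ large and approximate $p_n h^{\delta}$, which lies in $X$ because $h^{\delta}\in\Mult_X$ and $p_n\in X$, by a polynomial $q$ in the norm of $X$; since $h^{\gamma}\in\Mult_X$, the bound $\|(q-p_n h^{\delta})h^{\gamma}\|_X\le\|h^{\gamma}\|_{\Mult_X}\|q-p_n h^{\delta}\|_X$ lets me transfer this approximation through multiplication by $h^{\gamma}$. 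A triangle inequality then produces a genuine polynomial $q$ with $\|q h^{\gamma}-1\|_X$ arbitrarily small, proving that $h^{\gamma}$ is cyclic.

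Finally I would combine the two steps. Choosing $k$ so large that $2^k\alpha\ge1$, the ascent step gives that $h^{2^k\alpha}$ is cyclic, and the descent step with $\beta=2^k\alpha$ and $\gamma=1$ (so that $\delta=2^k\alpha-1\ge0$, the case $\delta=0$ being trivial and otherwise $h^{\delta}$ a genuine multiplier) yields that $h=h^{1}$ is cyclic. The only subtlety worth flagging is precisely the interplay between the two regimes: the descent alone suffices when $\alpha\ge1$, but for $\alpha<1$ one must first raise the exponent above $1$ via Lemma \ref{lemma:multipliers} before descending, which is exactly where the hypothesis that \emph{all} positive powers of $h$ are multipliers is used.
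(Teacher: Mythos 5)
Your proposal is correct and takes essentially the same route as the paper: iterate Lemma \ref{lemma:multipliers} to get $h^{2^k\alpha}$ cyclic with $2^k\alpha>1$, then descend to exponent $1$. Your descent step is simply an explicit unpacking of the paper's one-line claim that $h^{2^k\alpha}=h\,h^{2^k\alpha-1}\in[h]$, namely approximating the lower power (in your notation, $p_nh^{\delta}$) by polynomials and pushing the approximation through the bounded multiplication operator $M_{h^{\gamma}}$.
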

\begin{proof}
Pick $k$ in $\N$ so that $2^k\alpha>1$. By Lemma \ref{lemma:multipliers}, $h^{2^k\alpha}$ is cyclic in $X$. Since $h^{2^k\alpha}=h h^{2^k\alpha-1}\in[h]$, this yields that $h$ is cyclic as well.
\end{proof}

We now focus on the case $X=D^p_{p-1}$. In \cite{Wu99} it is shown that an analytic map $g$ on $\D$ is a multiplier of $D^p_{p-1}$ if and only if it is bounded and $$\nu_{g, p}:=|g'(z)|^p(1-|z|)^{p-1}$$ induces a Carleson measure for $D^p_{p-1}$, that is, it realizes a bounded embedding $D^p_{p-1}\subseteq L^p(\D, \nu_{g, p})$. It is interesting to note that Carleson measures for $D^p_{p-1}$ are characterized only for $p\le2$, and they are independent of $p$. Namely, a positive Borel measure $\nu$ is Carleson for $D^p_{p-1}$ if and only if there exists a constant $C$ such that, for all arcs $I\subseteq\D$,
\[
\nu(S(I))\leq C~|I|
\]
where 
\begin{equation}
\label{eqn:onebox}
S(I):=\{z\in\D\,|\, z/|z|\in I, 1-|z|\le|I| \}
\end{equation}
denotes the Carleson box associated to $I$. Condition \eqref{eqn:onebox} is usually referred to as the one-box condition. It is known that, for $p>2$, \eqref{eqn:onebox} is not sufficient for $\nu$ to be Carleson for $D^p_{p-1}$ \cite{Girela06JFA}. On the other hand, the stronger condition
\begin{equation}
\label{eqn:plog_onebox}
\nu(S(I))\lesssim |I|\left(\log\frac{e}{|I|}\right)^{1-\frac{p}{2}}\qquad I\subseteq\T
\end{equation}
is sufficient for $\mu$ to be Carleson for $D^p_{p-1}$, \cite{Pelaez14}. We will use the fact, proved in \cite[Lemma 4]{Anderson91}, that if $\mu$ is $\varphi$-smooth for some  positive, continuous, non decreasing function $\varphi$ on $(0, 1]$ such that $\varphi(t)/t^\beta$ is almost decreasing for some $0<\beta<1$, then 
\begin{equation}
\label{eqn:josechu_derivative}
\sup_{|z|=r}|S_\mu'(z)|\lesssim \frac{\varphi(1-r)}{1-r}.
\end{equation}
\begin{prop}
\label{prop:multiplier}
    Let $\mu$ be a singular measure on $\T$ satisfying \eqref{eqn:sufficient}. Then $S_\mu$ is a multiplier of $D^p_{p-1}$.
\end{prop}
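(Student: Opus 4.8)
The plan is to apply the multiplier characterization of \cite{Wu99} recalled above: since $S_\mu$ is inner it is automatically bounded by $1$, so it suffices to show that the measure $d\nu_{S_\mu,p}=|S_\mu'(z)|^p(1-|z|)^{p-1}\,dA(z)$ is Carleson for $D^p_{p-1}$. Rather than verify this directly, I would use the sufficient condition \eqref{eqn:plog_onebox}, which reduces the whole problem to the one-box estimate
\[
\nu_{S_\mu,p}(S(I))\lesssim |I|\left(\log\frac{e}{|I|}\right)^{1-\frac{p}{2}}.
\]

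To obtain pointwise control of $|S_\mu'|$ I would first note that \eqref{eqn:sufficient} is exactly the statement that $\mu$ is $\varphi$-smooth in the sense of Definition \ref{defi:smooth} with $\varphi(t)=(\log(e/t))^{-1/2}$. This $\varphi$ is positive, continuous and non-decreasing on $(0,1]$, and $\varphi(t)/t^\beta$ is (almost) decreasing for $\beta$ sufficiently close to $1$, so the hypotheses of \eqref{eqn:josechu_derivative} are met. That estimate then gives
\[
\sup_{|z|=r}|S_\mu'(z)|\lesssim\frac{1}{(1-r)\sqrt{\log\frac{e}{1-r}}}.
\]

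With this bound I would estimate $\nu_{S_\mu,p}(S(I))$ directly. On the circle $|z|=r$ the integrand $|S_\mu'(z)|^p(1-|z|)^{p-1}$ is controlled by $(1-r)^{-1}(\log(e/(1-r)))^{-p/2}$; integrating the angular variable over $I$ produces a factor $|I|$, and the substitution $u=1-r$ followed by $v=\log(e/u)$ turns the remaining radial integral into $\int_{\log(e/|I|)}^{\infty}v^{-p/2}\,dv$. The crucial point---and the one place where the hypothesis $p>2$ enters---is that this tail integral converges precisely because $p/2>1$, and equals a constant multiple of $(\log(e/|I|))^{1-p/2}$. This is exactly the right-hand side of the displayed one-box estimate, so $\nu_{S_\mu,p}$ satisfies \eqref{eqn:plog_onebox} and is therefore Carleson, completing the proof.

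I expect the only subtle point to be the bookkeeping of the logarithmic exponent: condition \eqref{eqn:sufficient} is calibrated so that the power $p/2$ appearing in the derivative estimate is exactly what makes the radial integral converge to the exponent $1-p/2$ demanded by \eqref{eqn:plog_onebox}. There is no slack at $p=2$, where the integral $\int v^{-1}\,dv$ diverges; this is consistent with the fact that no singular inner function can be cyclic in the Hardy space $\ell^2_A$.
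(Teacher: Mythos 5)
Your proposal is correct and follows essentially the same route as the paper: apply the multiplier characterization from \cite{Wu99} via the sufficient one-box condition \eqref{eqn:plog_onebox}, use \eqref{eqn:josechu_derivative} with $\varphi(t)=(\log(e/t))^{-1/2}$ to bound $|S_\mu'|$, and compute the radial integral over $S(I)$ to land exactly on the exponent $1-\frac{p}{2}$. Your remarks on verifying the almost-decreasing hypothesis for $\varphi(t)/t^\beta$ and on the lack of slack at $p=2$ are accurate and, if anything, slightly more explicit than the paper's own write-up.
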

\begin{proof}
By \eqref{eqn:josechu_derivative},  one sees that 
\[
|S_\mu'(z)|\lesssim \frac{1}{(1-r)\sqrt{\log\frac{e}{1-r}}}\qquad |z|=r.
\]
Hence, given an interval $I\subseteq\T$ and $p>2$
\[
\begin{split}
\nu_{S_\mu, p}(S(I))=&\int_{S(I)}|S_\mu'(z)|^p(1-|z|)^{p-1}~dA(z)\\
\lesssim&\int_{S(I)}\frac{dA(z)}{(1-|z|)\left(\log\frac{e}{1-|z|}\right)^\frac{p}{2}}\\
\simeq&|I|\int_{1-|I|}^1\frac{dr}{(1-r)\left(\log\frac{e}{1-r}\right)^\frac{p}{2}}\\
\simeq&|I|\left(\log\frac{e}{|I|}\right)^{1-\frac{p}{2}}.
\end{split}
\]
Thus $\nu_{S_\mu, p}$ satisfies \eqref{eqn:plog_onebox}, and $S_\mu$ is a multiplier of $D^p_{p-1}$.
\end{proof}

\section{Proof of Theorem \ref{theo:main}}
\label{sec:proof}

We are now ready for the proof of Theorem \ref{theo:main}. Let $\mu$ be a positive, singular Borel measure on $\T$ satisfying \eqref{eqn:sufficient}. By Proposition \ref{prop:multiplier}, $S_\mu^\beta$ is a multiplier of $D^p_{p-1}$ for all $\beta>0$. In view of Corollary \ref{coro:cyclic_mult}, it is enough to show the existence, for all $p>2$, of a power of $S_\mu$ that is cyclic in $D^p_{p-1}$. This is the case be for $\varphi$-smooth measures (according to Definition \ref{defi:smooth}), provided that $\varphi$ satisfies an integrability condition:
\begin{theo}
\label{theo:power}
Let $S_\mu$ be a singular inner function such that $\mu$ is $\varphi$-smooth. If 
\begin{equation}
\label{eqn:main_hyp}    
\int_0^1\frac{\varphi(t)^p}{t}\charphi(t)e^{\varepsilon\charphi(t)^2}\,dt<\infty
\end{equation}
for some $\varepsilon>0$, then there exists an $\alpha>0$ such that $S_\mu^\alpha$ is cyclic in $D^p_{p-1}$.
\end{theo}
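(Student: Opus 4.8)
The plan is to prove that $1$ lies in $[S_\mu^\alpha]_{D^p_{p-1}}$ for a suitably small $\alpha>0$, by exhibiting a norm-convergent family inside this invariant subspace that tends to $1$. Writing $S_\mu^\alpha=S_{\alpha\mu}$ and noting that $\alpha\mu$ is $(\alpha\varphi)$-smooth with $\langle\alpha\varphi\rangle=\alpha\charphi$, I would work throughout with the single singular inner function $S_{\alpha\mu}$ and apply the earlier estimates to it. For $r\in(0,1)$ set $g_r(z):=S_\mu(z)^\alpha/S_\mu(rz)^\alpha$. The dilate $z\mapsto S_\mu(rz)^{-\alpha}$ is holomorphic on $\overline{\D}$, hence a $D^p_{p-1}$-limit of its Taylor polynomials; since $S_\mu^\alpha\in D^p_{p-1}$ (which follows from \eqref{eqn:josechu_derivative} applied to $S_{\alpha\mu}$ together with the fact that \eqref{eqn:main_hyp} forces $\int_0^1\varphi(t)^p/t\,dt<\infty$) and $|S_\mu^\alpha|\le1$, multiplying these polynomials by $S_\mu^\alpha$ shows that each $g_r\in[S_\mu^\alpha]$. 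It then suffices to prove $\|g_r-1\|_{D^p_{p-1}}\to0$ as $r\to1$, after which a diagonal choice of polynomials produces the sequence $\{p_n\}$ required by the definition of cyclicity.

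Because $g_r(0)=1$, the claim reduces to $\int_\D(1-|z|)^{p-1}|g_r'(z)|^p\,dA(z)\to0$, which I would establish by dominated convergence. For fixed $z$ one has $g_r(z)\to1$ and $g_r'(z)\to0$ as $r\to1$, so the integrand tends to $0$ pointwise; the work is to produce an $r$-independent integrable majorant. Differentiating and using $|S_\mu^\alpha|\le1$ gives a pointwise bound of the form $|g_r'(z)|\lesssim |S_{\alpha\mu}'(z)|\,|S_{\alpha\mu}(rz)|^{-1}+|S_{\alpha\mu}'(rz)|\,|S_{\alpha\mu}(rz)|^{-2}$. For each term I would raise to the $p$-th power, integrate over the circle $|z|=\rho$, and insert the pointwise derivative bound \eqref{eqn:josechu_derivative} for $S_{\alpha\mu}$ (which reads $\sup_{|z|=\rho}|S_{\alpha\mu}'|\lesssim\alpha\varphi(1-\rho)/(1-\rho)$) together with the angular $L^q$-mean estimate of Corollary~\ref{coro:means} applied to $S_{\alpha\mu}$ (giving $\int_\T|S_{\alpha\mu}|^{-q}\lesssim\alpha\charphi\,e^{C_q\alpha^2\charphi^2}$, with $q=p$ and $q=2p$ respectively). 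In the first term every factor is naturally at radius $\rho$ and the monotonicity $\charphi(1-r\rho)\le\charphi(1-\rho)$ lets me replace $1-r\rho$ by $1-\rho$; in the second term the derivative and the reciprocal sit at radius $r\rho$, so I would substitute $s=r\rho$ and use $r-s\le1-s$ to compare the weight $(1-\rho)^{p-1}$ with $(1-s)^{p-1}$.

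Carrying this out, both contributions are dominated, uniformly for $r$ near $1$, by a constant multiple of $\alpha^{p+1}\int_0^1 t^{-1}\varphi(t)^p\charphi(t)\,e^{C_{2p}\alpha^2\charphi(t)^2}\,dt$. Here lies the only real obstacle, and the reason the theorem produces a power rather than $S_\mu$ itself: the exponential weight $e^{C_{2p}\alpha^2\charphi^2}$ coming from the martingale concentration inequality is integrable against the hypothesis \eqref{eqn:main_hyp} only if its constant does not exceed the given $\varepsilon$. Passing to the power $\alpha$ is exactly what rescales this constant by $\alpha^2$, so I would fix $\alpha\in(0,1]$ small enough that $C_{2p}\alpha^2\le\varepsilon$; then the majorant equals $\alpha^{p+1}\int_0^1 t^{-1}\varphi(t)^p\charphi(t)\,e^{\varepsilon\charphi(t)^2}\,dt$, finite by \eqref{eqn:main_hyp}. (The constants in \eqref{eqn:josechu_derivative} and Corollary~\ref{coro:means} are uniform in $\alpha\in(0,1]$, since the increments of $M^{\alpha\mu}$ and the smoothness constant of $\alpha\mu$ scale linearly in $\alpha$.) Dominated convergence then yields $\|g_r-1\|_{D^p_{p-1}}\to0$, whence $1\in[S_\mu^\alpha]$ and $S_\mu^\alpha$ is cyclic.
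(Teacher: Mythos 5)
Your proposal is correct in substance and its analytic core coincides with the paper's: the same quotient $g_r=S_\mu^\alpha/(S_\mu^\alpha)_r$, the same splitting $|g_r'(z)|\lesssim |(S_\mu^\alpha)'(z)|/|S_\mu(rz)|^{\alpha}+|(S_\mu^\alpha)'(rz)|/|S_\mu(rz)|^{2\alpha}$, the same two inputs (\eqref{eqn:josechu_derivative} and Corollary \ref{coro:means}, with exponents $p$ and $2p$), and the same mechanism whereby passing to the power $\alpha$ rescales the exponential constant below $\varepsilon$ (your quadratic scaling $C_{2p}\alpha^2$, obtained by viewing $S_\mu^\alpha=S_{\alpha\mu}$ with $\langle\alpha\varphi\rangle=\alpha\charphi$, is in fact cleaner bookkeeping than the paper's linear $2\alpha pC$; both suffice). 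Where you genuinely diverge is the wrapper: the paper closes by citing \cite[Proposition 5]{BrownShields84}, which requires only the \emph{uniform bound} $\sup_{r}\|g_r\|_{D^p_{p-1}}<\infty$ of \eqref{eqn:dilated}, whereas you prove the stronger statement $\|g_r-1\|_{D^p_{p-1}}\to 0$ by dominated convergence, together with an explicit (and correct) approximation argument showing $g_r\in[S_\mu^\alpha]$, using that $S_\mu(r\cdot)^{-\alpha}$ is holomorphic across $\overline{\D}$ and that $S_\mu^\alpha\in D^p_{p-1}$. This buys a self-contained proof with a stronger conclusion, but it demands a genuine $r$-independent pointwise majorant, and here your handling of the second term has a small slip: the substitution $s=r\rho$ with $r-s\le 1-s$ yields only a uniform bound on $\int_0^1 F_r(\rho)\,d\rho$, not a dominating function of $\rho$ (indeed, after substitution the integrand no longer tends to $0$ pointwise), so dominated convergence does not apply in that form. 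The fix is one line and is exactly the monotonicity the paper invokes: since $\varphi(s)/s$ is almost decreasing and $\charphi$ is decreasing, the map $s\mapsto (\varphi(s)/s)^p\,\charphi(s)\,e^{C_{2p}\alpha^2\charphi(s)^2}$ is almost decreasing, and $1-r\rho\ge 1-\rho$ then dominates the second term pointwise by (a constant times) the first term's majorant $\frac{\varphi(1-\rho)^p}{1-\rho}\charphi(1-\rho)e^{\varepsilon\charphi(1-\rho)^2}$, which is integrable by \eqref{eqn:main_hyp}. Alternatively, the uniform bound you did establish is precisely the hypothesis \eqref{eqn:dilated} of the Brown--Shields criterion, so your computation already implies the theorem along the paper's route even without repairing the dominated convergence step.
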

\begin{rem}
\label{ex:sqrtlog}
Equation \eqref{eqn:sufficient} implies \eqref{eqn:main_hyp}. Indeed, if we take $\varphi(t)=\frac{C}{\sqrt{\log\frac{e}{t}}}$, then
\[\charphi(s)^2 := C \int_s^1 \frac{dt}{t \log \frac{e}{t}},\] which, for $s \in (0,1)$, is controlled from above by $\log\log \frac{e}{s}$. This means that
 
 \[
\charphi(t)\leq C\sqrt{\log\log\frac{e}{t}}, 
 \]
 and thus 
 \[
\int_0^1\frac{\varphi(t)^p}{t}\langle\varphi\rangle(t)e^{\varepsilon\langle\varphi\rangle(t)^2}\,dt\le C \int_0^1 \frac{\sqrt{\log\log\frac{e}{t}}}{t\left(\log\frac{e}{t}\right)^{\frac{p}{2}-C\varepsilon}}\,dt.
\]
Since $p>2$, one can choose $\varepsilon$ small enough so that $p/2-C\varepsilon>1$, so the latter integral is finite.
\end{rem}
 \begin{proof}[Proof of Theorem \ref{theo:power}]
  Thanks to \cite[Proposition 5]{BrownShields84} a function $f$ is cyclic in $D^p_{p-1}$, provided that 
\begin{equation}
\label{eqn:dilated}
\sup_{t\in(0, 1)}\left\|\frac{f}{f_t} \right\|_{D^p_{p-1}}<\infty,
\end{equation}
since for all $t\in(0, 1)$ the function $f_t(z):=f(tz)$ is analytic on an open neighborhood of $\overline{\D}$. 

Let $\alpha$ be a positive number to be fixed later. Thanks to \eqref{eqn:josechu_derivative}, we have
\begin{equation}
\label{eqn:Josechu_derivative_Smu}
\sup_{|z|=r}|(S^\alpha_\mu)'(z)|\leq \alpha C \frac{\varphi(1-r)}{(1-r)},
\end{equation}
for some positive $C$.
Let $g_t(z):=S^\alpha_\mu(z)/S^\alpha_\mu(tz)$. We wish to prove that $\sup_{t\in(0, 1)}\|g_t\|_{D^p_{p-1}}<\infty$. To this end, observe that since $S_\mu$ is inner we have 
\[
|g_t'(z)|\le\frac{|(S^\alpha_\mu)'(z)|}{|S_\mu(tz)|^\alpha}+t\frac{|(S^\alpha_\mu)'(tz)|}{|S_\mu(tz)|^{2\alpha}}.
\]
Therefore, thanks to Corollary \ref{coro:means} and \eqref{eqn:Josechu_derivative_Smu},
\[
\begin{split}
\|g_t\|_{D^p_{p-1}}^p=&\int_\D|g_t(z)|^p(1-|z|^2)^{p-1}\,dA(z)\\
\lesssim&\int_0^1\frac{\varphi(1-r)^p}{1-r}\int_\T\frac{1}{|S_\mu(tre^{i
\theta})|^{\alpha p}}\,d\theta\,dr \\+&\int_0^1\frac{\varphi(1-tr)^p}{1-tr}\int_\T\frac{1}{|S_\mu(tre^{i
\theta})|^{\alpha p}}\,d\theta\,dr\\
\lesssim&\int_0^1\frac{\varphi(1-r)^p}{1-r}\charphi(1-tr)e^{\alpha p C\charphi(1-tr)^2}\,dr \\ +&\int_0^1\frac{\varphi(1-tr)^p}{1-tr}\charphi(1-tr)e^{2\alpha p C\charphi(1-tr)^2}\,dr.
\end{split}
\]
By assumption, both $\varphi(s)/s$ and $\charphi$ are decreasing. Hence 
\[
\|g_t\|_{D^p_{p-1}}^p\lesssim\int_0^1\frac{\varphi(1-r)^p}{1-r}\charphi(1-r)e^{2\alpha p C\charphi(1-r)^2}\,dr, 
\]
which is finite (uniformly on $t$) provided that $0<\alpha\le\frac{\varepsilon}{2pC}$. 
\end{proof}
This concludes the proof of Theorem \ref{theo:main}.

\begin{rem}
\label{rem:multipliers}
Our argument shows that a singular inner function $S_\mu$ satisfying \eqref{eqn:sufficient} is a multiplier of $D^p_{p-1}$ but not a multiplier of $\ell^p_A$. By Proposition \ref{prop:multiplier}, $S_\mu$ is a multiplier of $D^p_{p-1}$. On the other hand, notice that if $S_\mu$ is in $\Mult_{\ell^p_A}$, then $S_\mu$ is in $\Mult_{\ell^q_A}$, $q$ being the dual exponent of $p$, since the two multiplier algebras coincide, \cite{Nikolski66}. In particular, $S_\mu$ belongs to $\ell^q_A$, since $\ell^q_A$ contains constant functions. This contradicts the fact that $S_\mu$ is cyclic in $\ell^p_A$: let $\phi_{S_\mu}$ be the functional defined on polynomials  by
\[
\phi_{S_\mu}(f):=\int_{0}^{2\pi} f(e^{i\theta}) e^{i\theta}\overline{S_\mu(e^{i\theta})}\,d\theta.
\]
If $S_\mu\in\ell_A^q$, then $\phi_{S_\mu}$ extends to a bounded linear functional acting on $\ell^p_A$. Since 
\[
\phi_{S_\mu}(z^mS_\mu)=\int_{0}^{2\pi}e^{i(m+1)\theta}\,d\theta=0
\]
for all $m=0, 1, \dots$, we found a non-zero bounded linear functional that annihilates $[S_\mu]_{\ell^p_A}$. Hence $S_\mu$ cannot be cyclic in $\ell^p_A$, giving the desired contradiction. 
\end{rem}

\section{Outer Functions in $D^p_{p-1}$}
\label{sec:outer}
Any outer function in $H^2$ is cyclic in $H^2$, and any outer function in $\B$ is (weak$^*$) cyclic in $\B$, \cite{Brown91}. Hence the following question naturally arises:
\begin{question}
\label{q:outer}
    Let $p>2$. Is any outer function in $D^p_{p-1}$ cyclic for $D^p_{p-1}$?
\end{question}
The aim of this Section is to address Question \ref{q:outer} under some extra assumptions on $f$. As a preliminary remark we note that if $p>2$ then $H^p\subset D^p_{p-1}$, \cite{Pavlovic}. Therefore any outer function in $H^p$ is cyclic in $D^p_{p-1}$. In what follows, we show that the same holds for outer functions in $D^p_{p-1}\cap\B$:

\begin{theo}
\label{theo:outerBloch}
    Let $f$ be an outer function in $D^p_{p-1}\cap\B$, $p>2$. Then $f$ is cyclic in $D^p_{p-1}$. 
\end{theo}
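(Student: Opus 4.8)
The plan is to reproduce, for the outer function $f$, the dilation scheme already used for $S_\mu$ in the proof of Theorem \ref{theo:power}. By \cite[Proposition 5]{BrownShields84}, recalled in \eqref{eqn:dilated}, it suffices to establish the uniform bound
\[
\sup_{t\in(0,1)}\left\|\frac{f}{f_t}\right\|_{D^p_{p-1}}<\infty,\qquad f_t(z):=f(tz),
\]
and then deduce cyclicity from reflexivity of $D^p_{p-1}$ (valid since $p>2$). Concretely, set $g_t:=f/f_t$. Since $f$ is outer, $f_t$ is zero-free on a neighbourhood of $\overline{\D}$, so $1/f_t$ is bounded and analytic there, hence a multiplier approximable by its Taylor polynomials in multiplier norm; thus $g_t=(1/f_t)f\in[f]$. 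As $t\to1^{-}$ one has $g_t\to1$ pointwise, so a uniform norm bound forces a weakly convergent subnet whose limit is $1$; as $[f]$ is convex and closed, $1\in[f]$ and $f$ is cyclic.

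For the uniform bound I would write $g_t(0)=1$ together with the same splitting as in Theorem \ref{theo:power},
\[
|g_t'(z)|\le\frac{|f'(z)|}{|f(tz)|}+t\,\frac{|f(z)|\,|f'(tz)|}{|f(tz)|^2},
\]
where the Bloch hypothesis \eqref{eqn:bloch} supplies the derivative control $|f'(w)|\lesssim(1-|w|)^{-1}$ and the growth control $|f(w)|\lesssim\log\frac{e}{1-|w|}$, playing the role of \eqref{eqn:josechu_derivative}. The essential change is that $f$ is outer rather than inner, so Corollary \ref{coro:means} must be replaced by an estimate for the $p$-means of $1/|f|$. A delicate point, already visible in the model case $f(z)=1-z$, is that one must \emph{not} insert the pointwise Bloch bound $|f'(z)|\lesssim(1-|z|)^{-1}$, which destroys integrability near $\T$; instead one keeps $|f'(z)|^p(1-|z|)^{p-1}$ as the density of the finite energy measure $d\lambda_f:=|f'|^p(1-|z|)^{p-1}\,dA$, so that the first summand of $\|g_t\|_{D^p_{p-1}}^p$ becomes
\[
\int_\D\frac{d\lambda_f(z)}{|f(tz)|^{p}},
\]
and the analogous reduction (after the substitution $w=tz$) applies to the second summand.

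To control negative powers of $|f|$ I would exploit that, $f$ being outer, $\log\frac{1}{|f(z)|}$ is the Poisson integral of $-\log|f^*|$. Associating to the boundary datum $\log|f^*|$ the dyadic martingale $M_I:=|I|^{-1}\int_I\log|f^*|\,d\theta$ exactly as $M^\mu$ was associated to $\mu$ in Section \ref{sec:martingales}, the Bloch condition \eqref{eqn:bloch} forces the increments of $M$ to be uniformly bounded, via the classical correspondence between Bloch functions and dyadic martingales of bounded oscillation. This is precisely the hypothesis of Lemma \ref{lemma:Poisson}, so running that argument together with the concentration estimate \eqref{eqn:conc_mart} yields a mean bound $\int_\T|f(\rho e^{i\theta})|^{-p}\,d\theta\lesssim(1-\rho)^{-\kappa}$ with $\kappa=\kappa(p,\|f\|_\B)$; here the characteristic satisfies $A_n\lesssim\sqrt{n}\simeq\sqrt{\log\frac{e}{1-\rho}}$ in the bounded-increment regime, which is why the blow-up is polynomial rather than poly-logarithmic as in Remark \ref{ex:sqrtlog}.

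The main obstacle is the final balancing. A per-circle mean bound is by itself too lossy (it reintroduces the fatal $\frac{1}{1-r}$ factor), so the real task is to bound $\int_\D|f(tz)|^{-p}\,d\lambda_f(z)$ uniformly in $t$, the natural sufficient statement being
\[
\log f\in D^p_{p-1},\qquad\text{i.e.}\qquad \int_\D\frac{|f'(z)|^p}{|f(z)|^p}(1-|z|)^{p-1}\,dA(z)<\infty,
\]
together with a domination of $|f(tz)|^{-p}$ by $|f(z)|^{-p}$ up to a fixed constant, which would upgrade this to the required $t$-uniform estimate. I expect the heart of the argument, and the place where both hypotheses $f\in\B$ and $f\in D^p_{p-1}$ are genuinely indispensable, to be the verification of $\log f\in D^p_{p-1}$: the Bloch condition must be used to control, in the integrated sense weighted by $\lambda_f$, how much $\log|f|$ can increase towards the boundary, since the naive comparison $|f(tz)|\gtrsim|f(z)|$ can fail when $\log|f^*|$ has large negative part. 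The model case $f(z)=1-z$, where $\log f\in D^p_{p-1}$ and $\sup_t\|g_t\|_{D^p_{p-1}}<\infty$ hold by direct computation, indicates that the cancellation present in $g_t'=g_t\bigl((\log f)'(z)-t(\log f)'(tz)\bigr)$ is exactly what makes the uniform bound attainable, and quantifying this cancellation for general outer $f\in\B\cap D^p_{p-1}$ is the crux.
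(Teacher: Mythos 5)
There is a genuine gap: your argument is a program that stalls exactly where you say ``the crux'' lies, and the two intermediate claims meant to carry it are unjustified or false. First, the claim that the Bloch condition on $f$ forces uniformly bounded increments of the dyadic martingale $M_I:=|I|^{-1}\int_I\log|f^*|\,d\theta$ is wrong: bounded martingale oscillation would correspond to $\log f\in\B$, i.e.\ to control of $(\log f)'=f'/f$, and the factor $1/|f|$ ruins this wherever $f$ is small. This is precisely the difference from $S_\mu$, where $\log(1/|S_\mu|)=P_\mu\ge 0$ comes from a positive measure; an outer $f\in\B\cap D^p_{p-1}$ can have boundary modulus as flat as $|f^*(e^{i\theta})|\approx e^{-c|\theta|^{-\alpha}}$ near a point, so no analogue of Corollary \ref{coro:means} holds and the $p$-means of $1/|f|$ may even be infinite on every circle. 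Second, the ``natural sufficient statement'' $\log f\in D^p_{p-1}$ is neither a hypothesis of the theorem nor a consequence of $f\in D^p_{p-1}\cap\B$; and even if you granted it, deducing cyclicity from $f,\log f\in D^p_{p-1}$ is exactly the paper's open Question \ref{q:Besov}, while the domination $|f(tz)|\gtrsim|f(z)|$ you would additionally need is, as you yourself note, false when $\log|f^*|$ has a large negative part. So the dilation scheme applied directly to $f$, modeled on Theorem \ref{theo:power}, cannot be closed along these lines.

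The paper avoids lower bounds on $|f|$ altogether by never dilating $f$ itself. It first observes that bounded outer functions are cyclic in $D^p_{p-1}$ (weak$^*$ cyclicity in $H^\infty$ plus the weak$^*$ continuous embedding $H^\infty\subseteq D^p_{p-1}$ for $p>2$), then reduces to this case: take $g$ outer and bounded with $|g^*|=\min(1,|f^*|)$, so $\varphi:=g/f\in H^\infty$, and show $g\in[f]$ by proving $\sup_t\|\varphi_t f\|_{D^p_{p-1}}<\infty$ via the splitting
\[
(\varphi_t f)'=\varphi_t f'+(f-f_t)\varphi_t'+(\varphi_t f_t)'.
\]
Here the dilated factor is the \emph{bounded} function $\varphi$, and the dangerous term $\varphi_t'$ is paired with $f-f_t$, which the Bloch hypothesis controls by $\|f\|_\B\log\frac{1-t|z|}{1-|z|}$; the resulting factor $x^{p-1}\bigl(\log\frac{1}{x}\bigr)^p$ with $x=\frac{1-|z|}{1-t|z|}$ is bounded (Lemma \ref{lemma:int}). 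This realizes, in integrated form, precisely the cancellation you identified in $g_t'=g_t\bigl((\log f)'(z)-t(\log f)'(tz)\bigr)$ but could not quantify, and it is the step where both hypotheses $f\in\B$ and $f\in D^p_{p-1}$ are used — with no martingale estimates and no integrability of negative powers of $|f|$ required.
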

Since $D^p_{p-1}\cap\B\not\subset H^p$, Theorem \ref{theo:outerBloch} doesn't follow directly from the embedding $H^p\subset D^p_{p-1}$ mentioned above. The main tool we use is the following adaptation of the argument in \cite{Brown91}:

\begin{lemma}
    \label{lemma:int}
Let $p>1$, $\varphi$ in $D^p_{p-1}$ and $f$ in the Bloch space $\mathcal{B}$. Then 
\[
\sup_{0<t<1}\int_\D|(f(z)-f(tz))\varphi'(tz))|^p(1-|z|)^{p-1}dA(z)\lesssim_p\|\varphi\|_{D^p_{p-1}}^p\|f\|_{\mathcal{B}}^p
\]
\end{lemma}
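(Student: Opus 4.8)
The plan is to reduce the two-dimensional integral to a one-dimensional one: first I would estimate the factor $f(z)-f(tz)$ pointwise using only the Bloch condition, and then integrate out the angular variable. Writing $r=|z|$ and integrating $f'$ along the radius from $tz$ to $z$, the Bloch bound $|f'(w)|\le\|f\|_{\mathcal{B}}/(1-|w|)$ yields
\[
|f(z)-f(tz)|=\left|\int_t^1 f'(sz)\,z\,ds\right|\le\|f\|_{\mathcal{B}}\int_t^1\frac{r\,ds}{1-sr}=\|f\|_{\mathcal{B}}\log\frac{1-tr}{1-r}.
\]
Substituting this into the integral, factoring out $\|f\|_{\mathcal{B}}^p$, passing to polar coordinates, and writing $I_p(\rho):=\int_0^{2\pi}|\varphi'(\rho e^{i\theta})|^p\,d\theta$, the claim reduces to the one-variable estimate
\[
\int_0^1\left(\log\frac{1-tr}{1-r}\right)^p(1-r)^{p-1}r\,I_p(tr)\,dr\lesssim_p\|\varphi\|_{D^p_{p-1}}^p,
\]
to be proved uniformly in $t\in(0,1)$.

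The two facts I would use about $I_p$ are that it is non-decreasing in $\rho$ (since $|\varphi'|^p$ is subharmonic, so its integral means increase by Hardy's convexity theorem) and that $\int_0^1(1-\rho)^{p-1}\rho\,I_p(\rho)\,d\rho\le\|\varphi\|_{D^p_{p-1}}^p$. The obstruction is that the logarithm blows up as $r\to1$, so a naive pointwise comparison of the two integrands fails; the saving feature is that wherever the logarithm is large, the factor $I_p(tr)$ is evaluated at radius $tr$, which stays bounded away from the boundary as long as $t$ is bounded away from $1$. I would therefore split into two regimes.

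For $t\le\frac12$ I would bound $I_p(tr)\le I_p(\tfrac12)$ by monotonicity and pull it out; the remaining integral $\int_0^1(\log\frac{1}{1-r})^p(1-r)^{p-1}\,dr$ is a finite constant (here $p>1$ ensures the weight $(1-r)^{p-1}$ beats the logarithmic power), and $I_p(\tfrac12)\lesssim\|\varphi\|_{D^p_{p-1}}^p$ because monotonicity gives $\|\varphi\|_{D^p_{p-1}}^p\gtrsim\int_{1/2}^{3/4}(1-\rho)^{p-1}\rho\,I_p(\rho)\,d\rho\gtrsim I_p(\tfrac12)$. For $\frac12\le t<1$ I would instead change variables $\rho=tr$, turning the integral into
\[
\frac{1}{t^{p+1}}\int_0^t\left(\log\frac{t(1-\rho)}{t-\rho}\right)^p(t-\rho)^{p-1}\rho\,I_p(\rho)\,d\rho,
\]
and then set $a:=\frac{t-\rho}{1-\rho}\in(0,1)$, so that $(t-\rho)^{p-1}=a^{p-1}(1-\rho)^{p-1}$ and $\log\frac{t(1-\rho)}{t-\rho}=\log(t/a)\le\log(1/a)$. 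The pointwise bound then follows from $\sup_{0<a<1}a^{p-1}\big(\log\tfrac1a\big)^p<\infty$ (finite precisely because $p>1$), together with $t^{-(p+1)}\le2^{p+1}$, leaving exactly $\int_0^1(1-\rho)^{p-1}\rho\,I_p(\rho)\,d\rho$.

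The main obstacle is the uniform-in-$t$ control of the logarithmic singularity at the boundary; it is resolved by combining the monotonicity of the integral means $I_p$ (which moves the mass hit by the large logarithm to radii away from $\partial\D$) with the change of variables $\rho=tr$ in the regime where $t$ is near $1$. The hypothesis $p>1$ enters in both regimes, guaranteeing finiteness of the relevant constants.
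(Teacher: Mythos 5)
Your proof is correct, and its engine is the same as the paper's: after the identical Bloch estimate $|f(z)-f(tz)|\le\|f\|_{\mathcal{B}}\log\frac{1-t|z|}{1-|z|}$, everything hinges on the boundedness on $(0,1)$ of $x\mapsto x^{p-1}\left(\log\frac{1}{x}\right)^p$, which is exactly where $p>1$ enters in both arguments. The paper applies that bound pointwise in the two-dimensional integral, writing $(1-|z|)^{p-1}=(1-t|z|)^{p-1}\left(\frac{1-|z|}{1-t|z|}\right)^{p-1}$ and absorbing the logarithm, which immediately reduces matters to $\int_\D|\varphi'(tz)|^p(1-t|z|)^{p-1}\,dA(z)$; no polar coordinates, no monotonicity of integral means, and no case split in $t$ are needed. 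Your variable $a=\frac{t-\rho}{1-\rho}$ with $\rho=tr$ equals $t\cdot\frac{1-|z|}{1-t|z|}$, so your regime $t\ge\frac{1}{2}$ is the paper's pointwise step in lightly disguised form. What your longer route buys is explicitness about uniformity in $t$: substituting $w=tz$ in the paper's final integral produces a factor $t^{-2}$ (your $t^{-(p+1)}$ after the polar change of variables), which degenerates as $t\to0$ and which the paper's terse ending leaves implicit; your $t\le\frac{1}{2}$ regime, using Hardy's convexity to get $I_p(tr)\le I_p(\tfrac{1}{2})\lesssim_p\|\varphi\|_{D^p_{p-1}}^p$, settles that point cleanly. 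One trivial quibble: the finiteness of $\int_0^1\left(\log\frac{1}{1-r}\right)^p(1-r)^{p-1}\,dr$ holds for all $p>0$, not only $p>1$; the hypothesis $p>1$ is genuinely needed only for $\sup_{0<a<1}a^{p-1}\left(\log\frac{1}{a}\right)^p<\infty$.
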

\begin{proof}
Since $f(z)=\int_0^zf'(s)ds$, one gets
\[
|f(z)-f(tz)|\le\int_{tz}^z|f'(z)|d|z|\le\|f\|_\mathcal{B}\log\frac{1-t|z|}{1-|z|},
\]
where we used that $|f'(z)|\le\|f\|_\mathcal{B}/(1-|z|)$. Therefore, for all $t\in(0, 1)$,
\[
\begin{split}
&\int_\D|(f(z)-f(tz))\varphi'(tz)|^p(1-|z|)^{p-1}dA(z)\\
\le&\|f\|^p_\mathcal{B}\int_\D|\varphi'(tz)|^p|(1-t|z|)|^{p-1}\left(\frac{1-|z|}{1-t|z|}\right)^{p-1}\left(\log\frac{1-t|z|}{1-|z|}\right)^pdA(z).
\end{split}
\]
For all $t$ and $|z|$ in $(0, 1)$, $\frac{1-|z|}{1-t|z|}$ lies in $(0, 1)$. The Lemma then follows by observing that the function $x\mapsto x^{p-1}\left(\log\frac{1}{x}\right)^p$ is bounded on the unit interval, provided that $p>1$.
\end{proof}
As a Corollary, we obtain a sufficient condition for a function  in $D^p_{p-1}$ to be in the invariant subspace generated by another:
\begin{coro}
\label{coro:leq}
Let $p >1$, $f\in D^p_{p-1}\cap\mathcal{B}$ and $g$ in $D^p_{p-1}$ such that $g/f\in H^\infty$. Then $g\in [f]$.
\end{coro}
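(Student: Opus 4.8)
The plan is to exhibit, for each $t\in(0,1)$, a function lying in $[f]$, and to let these converge weakly to $g$, using that $[f]$, being a norm-closed (hence convex and weakly closed) subspace, will contain the weak limit. Write $h:=g/f\in H^\infty$ and denote by $f_t,g_t,h_t$ the dilates $f(t\,\cdot),g(t\,\cdot),h(t\,\cdot)$. Since $g=hf$, the natural approximants are the products $h_tf$, and the first step is to verify that $h_tf\in[f]$ for every fixed $t$. Indeed $h_t$ is holomorphic on a neighbourhood of $\overline{\D}$, so its Taylor polynomials $q_n$ converge to $h_t$ together with their derivatives uniformly on $\overline{\D}$; combined with the fact that $f\in\mathcal{B}$ makes $\int_\D|f|^p(1-|z|)^{p-1}\,dA$ finite (the logarithmic Bloch growth of $f$ is integrable against $(1-|z|)^{p-1}$), this yields $\|q_nf-h_tf\|_{D^p_{p-1}}\to0$ and hence $h_tf\in[f]$.

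The crux is the uniform bound $\sup_{t}\|h_tf\|_{D^p_{p-1}}<\infty$. I would obtain it from the identity $h_tf=g_t+h_t(f-f_t)$, which is valid because $g_t=h_tf_t$. The dilates $g_t$ are uniformly bounded in $D^p_{p-1}$, so everything reduces to bounding $h_t(f-f_t)$. Differentiating gives $(h_t(f-f_t))'=t\,h'(t\,\cdot)\,(f-f_t)+h_t\,(f-f_t)'$; the second summand contributes at most $\|h\|_\infty^p\|f-f_t\|_{D^p_{p-1}}^p$, which even tends to $0$ as $t\to1$, while the first summand is exactly the quantity estimated by Lemma \ref{lemma:int} with $\varphi=h$ and Bloch function $f$, and is therefore bounded by a multiple of $\|h\|_{D^p_{p-1}}^p\|f\|_{\mathcal{B}}^p$, uniformly in $t$. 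Applying Lemma \ref{lemma:int} requires $h\in D^p_{p-1}$; this is where the hypothesis $g/f\in H^\infty$ enters, through the embedding $H^\infty\subset D^p_{p-1}$ (immediate from \eqref{eqn:emb_ge2} when $p\ge2$, via $H^\infty\subset H^p$).

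Finally, I would pass to a weak limit. For $1<p<\infty$ the space $D^p_{p-1}$ is reflexive, since $\varphi\mapsto(\varphi(0),\varphi')$ embeds it isometrically onto a closed subspace of $\C\oplus L^p(\D,(1-|z|)^{p-1}\,dA)$. Hence from the bounded family $\{h_tf\}$ one extracts a sequence with $h_{t_n}f\rightharpoonup F$ as $t_n\to1$. Because point evaluations are bounded functionals on $D^p_{p-1}$, weak convergence forces pointwise convergence, and $h_{t_n}(z)f(z)=h(t_nz)f(z)\to h(z)f(z)=g(z)$ for each $z\in\D$, so $F=g$. As each $h_{t_n}f$ lies in the weakly closed subspace $[f]$, I conclude $g\in[f]$.

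I expect the main obstacle to be precisely the cross term $t\,h'(t\,\cdot)(f-f_t)$: it couples the merely bounded function $h$, whose derivative blows up near $\T$, with the difference of dilates of $f$. This is the term that Lemma \ref{lemma:int} is tailored to control, trading the unboundedness of $h'$ for the Bloch seminorm of $f$ (which tames $f-f_t$ by a logarithmic factor), after the change of variables $w=tz$ reduces the remaining integral to $\|h\|_{D^p_{p-1}}^p$; the accompanying subtlety is ensuring $h\in D^p_{p-1}$, which is automatic for $p\ge2$.
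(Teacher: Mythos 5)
Your proof is essentially the paper's proof: the paper also takes the approximants $\varphi_t f$ with $\varphi=g/f$, notes $\varphi_tf\in[f]$, and reduces the uniform bound $\sup_t\|\varphi_tf\|_{D^p_{p-1}}<\infty$ to three terms --- dilates of $f$, dilates of $g=\varphi f$, and the cross term $|f-f_t|\,|\varphi_t'|$ handled by Lemma \ref{lemma:int} --- which is exactly what your identity $h_tf=g_t+h_t(f-f_t)$ produces after differentiation, just packaged as an algebraic identity instead of the paper's add-and-subtract. Your passage to the limit via reflexivity of $D^p_{p-1}$ and weak closedness of $[f]$ is a more explicit version of the paper's one-line ``pointwise convergence plus uniform norm bound suffices,'' and your verification that $h_tf\in[f]$ via Taylor polynomials of $h_t$ is likewise a detail the paper leaves implicit; both are correct.

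The one genuine issue is the range of $p$. The corollary is stated for $p>1$, but your justification of the hypothesis $h\in D^p_{p-1}$ of Lemma \ref{lemma:int} --- namely $H^\infty\subset H^p\subset D^p_{p-1}$ via \eqref{eqn:emb_ge2} --- only works for $p\ge2$; for $1<p<2$ the inclusions \eqref{eqn:emb_le2} run the wrong way, and you explicitly leave this case open (``automatic for $p\ge2$''). The embedding $H^\infty\subset D^p_{p-1}$ does in fact hold for all $0<p<2$, but it needs an argument: since $H^\infty\subset\mathrm{BMOA}$, the measure $|h'(z)|^2(1-|z|^2)\,dA(z)$ is Carleson, whence $\int_\T|h'(re^{i\theta})|^2\,d\theta\lesssim(1-r)^{-1}$, and H\"older on the circle gives $\int_\T|h'(re^{i\theta})|^p\,d\theta\lesssim(1-r)^{-p/2}$, so that
\[
\int_\D|h'(z)|^p(1-|z|)^{p-1}\,dA(z)\lesssim\int_0^1(1-r)^{\frac{p}{2}-1}\,dr<\infty .
\]
Note that the Bloch bound $|h'(z)|\le\|h\|_{\mathcal B}(1-|z|)^{-1}$ alone would \emph{not} suffice here, so the BMOA step is really needed. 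To be fair, the paper's own proof is equally silent on this point (it invokes Lemma \ref{lemma:int} with $\varphi=g/f\in H^\infty$ without verifying $\varphi\in D^p_{p-1}$ for $1<p<2$), and the corollary is only applied with $p>2$; still, since you are proving the statement as written, you should either supply the BMOA argument above or restrict the claim to $p\ge2$.
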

\begin{proof}
Let $\varphi:=g/f$. By assumption, $\varphi$ is a bounded analytic function. Since $\varphi_t f$ converges pointwise to $g$ and $\varphi_t f\in[f]$, it suffices to show that $\sup_t\|\varphi_tf\|_{D^p_{p-1}}<\infty$. To this end, notice that for all $0<t<1$
\[
\begin{split}
|(\varphi_tf)'|\le &|\varphi_t| |f'|+ |f-f_t|~|\varphi'_t|+|(\varphi_t)'f_t|\\
\le &\|\varphi\|_\infty (|f'|+|(f_t)'|)+ |f-f_t|~|\varphi'_t|+|(\varphi_tf_t)'|\\
:=& \|\varphi\|_\infty h_1+h_2+h_3,
\end{split}
\]
respectively. We wish to show that 
\begin{equation}
\label{eqn:int123}
\int_\D h_i^p(z)(1-|z|)^{p-1}dA(z)\qquad i=1, 2, 3
\end{equation}
are uniformly bounded on $t$. For $i=1$, the integral in \eqref{eqn:int123} is bounded since $f$ is in $D^p_{p-1}$ and dilates of $D^p_{p-1}$ functions converge in norm. Similarly, since $g=\varphi f$ is in $D^p_{p-1}$ the integral in \eqref{eqn:int123} is also uniformly bounded for $i=3$. For $i=2$, the integral in \eqref{eqn:int123} is bounded thanks to Lemma \ref{lemma:int}.
\end{proof} 

We are now ready for the proof of Theorem \ref{theo:outerBloch}.

\begin{proof}[Proof of Theorem \ref{theo:outerBloch}]
As in \cite[Th. 3]{Brown91}, we note that any bounded outer function is weak$^*$ cyclic in $H^\infty$, and therefore cyclic in $D^p_{p-1}$ since $H^\infty\subseteq D^p_{p-1}$ for all $p>2$ and the embedding is weak$^*$ continuous. Thanks to Corollary \ref{coro:leq}, we only have to show that there exists a bounded outer function $g$ in $D^p_{p-1}$ that is pointwise bounded by $f$. The function 
\[
g(z):=e^{\int_\T\frac{e^{i\theta}+z}{e^{i\theta}-z}~\log|g^*(e^{i\theta})|\,d\theta}\qquad z\in\D
\]
where
\[
g^*(e^{i\theta}):=\begin{cases}
1\qquad &\text{if}\,\,|f^*(e^{i\theta})|\ge1\\
|f^*(e^{i\theta})|&\text{otherwise}
\end{cases}
\]
is the function we seek.
\end{proof}

\begin{comment}
From Corollary \ref{theo:outerBloch}, one could expect the answer to this question to be positive, at least for $p>2$. However, other families of Besov spaces (for which $D^p_{p-1}$ represent the limiting cases) do satisfy certain capacitary restrictions  \cite{walsh}. Such properties, if proved for $D^p_{p-1}$, could be exploited to show additional necessary conditions for cyclicity, in the spirit of the celebrated theorem by Brown and Shields regarding the logarithmic capacity in Dirichlet spaces \cite{BrownShields84}.
\end{comment}

 \section{Logarithmic Conditions}
\label{sec:log}
As we pointed out in Section \ref{sec:martingales},  if $\nu$ is a $\varphi$-smooth measure on $\T$ and 
 \[
 \int_0^1\frac{\varphi^2(t)}{t}<\infty,
 \]
 then $\nu$ is absolutely continuous with respect to Lebesgue measure. On the other hand, if $\int_0^1\varphi(t)^2/t=\infty$, then there exists a $\varphi$-smooth singular inner measure (see \cite{Kahane69}). In particular, there exists a singular $\varphi$-smooth measure on $\T$ 
 such that 
 \begin{equation}
 \label{eqn:varphi_p}
 \int_0^1\frac{\varphi^p(t)}{t}<\infty,
 \end{equation}
 for all $p>2$. It turns out that singular inner functions generated by such measures have logarithms in $D^p_{p-1}$:
\begin{lemma}
    \label{prop:psum_nu}
Let $p>0$ and let 
\begin{equation}
\label{eqn:f_nu}
f_\nu(z):=\exp\left\{-\int_\T\frac{w+z}{w-z}\,d\nu(w)\right\},
\end{equation}
where $\nu$ is a positive Borel $\varphi$-smooth measure. Assume that $\varphi$ satisfies the additional integrability condition \eqref{eqn:varphi_p}. Then $\log f_\nu$ is in $D^p_{p-1}$.
\end{lemma}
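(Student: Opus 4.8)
The plan is to bound the Besov energy $\int_\D(1-|z|)^{p-1}|(\log f_\nu)'(z)|^p\,dA(z)$ directly (the constant term $|\log f_\nu(0)|=\nu(\T)$ being harmless), by estimating the $L^p(\T)$ norm of $(\log f_\nu)'$ on each circle $|z|=r$. Differentiating \eqref{eqn:f_nu} gives
\[
(\log f_\nu)'(z)=-2\int_\T\frac{w}{(w-z)^2}\,d\nu(w),
\]
and the target estimate is
\[
\left(\int_\T|(\log f_\nu)'(re^{i\theta})|^p\,d\theta\right)^{1/p}\lesssim\frac{\varphi(1-r)}{1-r}.
\]
Granting this, the change of variables $t=1-r$ turns the energy into $\int_0^1\frac{\varphi(t)^p}{t}\,dt$, which is finite by \eqref{eqn:varphi_p}, and the lemma follows. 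I first note that the naive route is not enough: since $(\log f_\nu)'=S_\nu'/S_\nu$, the derivative bound \eqref{eqn:josechu_derivative} only yields $|(\log f_\nu)'(z)|\lesssim \varphi(1-r)/((1-r)|S_\nu(z)|)$, and integrating the resulting factor $1/|S_\nu|^p$ with Corollary \ref{coro:means} reproduces the exponential term $\charphi(1-r)e^{C\charphi(1-r)^2}$; this would only give the stronger hypothesis \eqref{eqn:main_hyp}. To reach the weaker \eqref{eqn:varphi_p} one must keep the cancellation present in the (complex, mean-zero) kernel rather than passing to its modulus.

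The cancellation is the identity $\int_\T \frac{w}{(w-z)^2}\,d\theta=0$ (the $d\theta$-integral reduces to a contour integral of an exact form), which lets me subtract from $\nu$ any constant multiple of arc length. Fixing $z=re^{i\theta_0}$ and letting $I_z$ be its dyadic interval of generation $n$ with $2^{-n}\simeq 1-r$, I center $\nu$ at its local average $M^\nu_{I_z}$ and integrate by parts, replacing $d\nu$ by the centered distribution function $F(s):=\nu(\{\theta_0<\theta\le\theta_0+s\})-M^\nu_{I_z}s$, which vanishes at $s=0$. Since the kernel's $s$-derivative has size $\simeq((1-r)^2+s^2)^{-3/2}$, this produces
\[
|(\log f_\nu)'(z)|\lesssim\int_{-\pi}^{\pi}\frac{|F(s)|}{((1-r)^2+s^2)^{3/2}}\,ds,
\]
up to a bounded term coming from the far scale $|s|\simeq\pi$, which is harmless since $\varphi(1-r)/(1-r)\gtrsim 1$. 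Splitting the integral into the central region $|s|\le 1-r$ and the dyadic shells $|s|\simeq 2^j(1-r)$, the deviation $|F(s)|$ on a shell is controlled by $|s|$ times the fluctuation of $M^\nu$ between scales $1-r$ and $|s|$, together with a purely $\varphi$-smooth term; the deterministic contributions sum to $\lesssim\varphi(1-r)/(1-r)$ using that $\varphi$ is increasing with $\varphi(t)/t^\beta$ almost decreasing for some $\beta<1$, so that the scales form a geometric series.

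The main obstacle, and the only genuinely probabilistic step, is controlling the fluctuation contribution $\frac{1}{1-r}\sum_{j\ge0}2^{-j}\,\bigl|M^\nu_n(\theta_0)-M^\nu_{n-j}(\theta_0)\bigr|$ in $L^p(\T)$. Here I intend to apply the concentration inequality for dyadic martingales (the Lemma preceding \eqref{eqn:conc_mart}) to the increment $M^\nu_n-M^\nu_{n-j}$, whose square function lives on the levels between $n-j$ and $n$; its supremum is $\lesssim\bigl(\sum_{k=0}^{j-1}\varphi(2^k(1-r))^2\bigr)^{1/2}\lesssim\varphi(1-r)\,2^{j\beta}$, once more by the almost-decreasing hypothesis on $\varphi(t)/t^\beta$. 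The sub-Gaussian tail then yields $\|M^\nu_n-M^\nu_{n-j}\|_{L^p(\T)}\lesssim_p\varphi(1-r)\,2^{j\beta}$, and summing $\frac1{1-r}\sum_j 2^{-j}\varphi(1-r)2^{j\beta}\lesssim\varphi(1-r)/(1-r)$ because $\beta<1$. Combining the deterministic and fluctuation parts gives the target $L^p$ bound on every circle and closes the argument. The delicate points to execute carefully are the integration by parts on the circle (keeping the non-periodic far term under control) and the passage from $|F(s)|$ to the inter-scale martingale increments through the modulus of smoothness $\omega_\nu\le C t\varphi(t)$.
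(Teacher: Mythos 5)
Your argument is correct in outline, but it is a much longer route than the paper's, and its starting premise --- that ``the naive route is not enough'' --- rests on a misreading. The paper's entire proof is two lines: \cite[Lemma 4]{Anderson91} bounds the derivative of the Herglotz integral itself, giving the \emph{pointwise} estimate $|(\log f_\nu)'(z)|\lesssim \varphi(1-|z|)/(1-|z|)$ directly from $\varphi$-smoothness (this is also how \eqref{eqn:josechu_derivative} is proved there: one bounds $(\log S_\mu)'$ and then uses $|S_\mu|\le1$, so no division by $|S_\mu|$ and no appeal to Corollary \ref{coro:means} ever enters). Integrating in polar coordinates then yields $\int_0^1 \varphi(t)^p t^{-1}\,dt<\infty$ by \eqref{eqn:varphi_p}, exactly as in your final step. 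What you have done is reconstruct the proof of Anderson's Lemma 4 from scratch: your cancellation identity $\int_\T w(w-z)^{-2}\,d\theta=0$, the centering at the local dyadic average, and the shell decomposition are precisely the mechanism behind that lemma.

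Moreover, your ``only genuinely probabilistic step'' is superfluous, and your own estimates show why: the increments satisfy $|M^\nu_n(\theta_0)-M^\nu_{n-j}(\theta_0)|\le \tfrac{1}{4}\sum_{k=n-j+1}^{n}\varphi(2^{-k})\lesssim j\,2^{j\beta}\varphi(1-r)$ \emph{pointwise}, by the increment bound \eqref{eqn:bddincr} together with $\varphi$ increasing and $\varphi(t)/t^\beta$ almost decreasing; since $\sum_j j\,2^{j(\beta-1)}<\infty$, the fluctuation sum $\frac{1}{1-r}\sum_j 2^{-j}|M^\nu_n-M^\nu_{n-j}|$ is bounded by $\varphi(1-r)/(1-r)$ deterministically. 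So the sub-Gaussian concentration inequality buys you nothing here (it only improves a factor $j$ to roughly $\sqrt{j}$, which the geometric weight $2^{-j}$ already absorbs), and dropping it both simplifies the argument and upgrades your $L^p$-on-circles conclusion to the pointwise bound, which in particular handles all $p>0$ without worrying about the failure of the triangle inequality in $L^p$ for $p<1$. In short: your proof works modulo the careful execution you flag (the integration by parts with its far-scale boundary term, and the passage from $|F(s)|$ to inter-scale increments via $\omega_\nu$), but the concentration machinery should be excised, and the result is then exactly the cited lemma of \cite{Anderson91} that the paper invokes.
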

\begin{proof}
Thanks to \cite[Lemma 4]{Anderson91} one obtains that $|(\log f_\nu)'|\lesssim\frac{\varphi(1-r)}{1-r}$, hence
\[
\int_{\D}|\left(\log f_\nu\right)'(z)|^p(1-|z|)^{p-1}dA(z)\lesssim \int_0^1\frac{\varphi(t)^p}{t}\,dt<\infty.
\]
\end{proof}

When $p \ge 2$, Lemma \ref{prop:psum_nu} and the embedding into $\ell^p_A$ spaces yield a summability property for the Fourier coefficients of $\nu$
\[
\hat{\nu}(z):=\int_\T e^{-in\theta}d\nu(\theta)\qquad n\in\Z
\]
provided that $\nu$ is sufficiently smooth.
\begin{coro}
\label{coro:Fourier_ellp}
Let $p\ge2$ and let $\nu$ be a positive singular $\varphi$-smooth measure such that $\varphi$ satisfying the $p$ integrability condition \eqref{eqn:varphi_p}. Then 
\[
\sum_{n\in\Z}|\hat\nu(n)|^p<\infty.
\]
\end{coro}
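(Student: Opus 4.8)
The plan is to read off the Maclaurin coefficients of $\log f_\nu$ directly in terms of the Fourier coefficients $\hat\nu(n)$, and then invoke Lemma \ref{prop:psum_nu} together with the embedding \eqref{eqn:emb_ge2} to control them. Since $\nu$ is a \emph{positive} measure, it is in particular real, so its Fourier coefficients enjoy the Hermitian symmetry $\hat\nu(-n)=\overline{\hat\nu(n)}$; this lets us pass from a sum over $n\ge 0$ to the full sum over $\Z$ at essentially no cost.

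First I would expand the Herglotz kernel. For $w=e^{i\theta}\in\T$ and $|z|<1$ one has $1/w=\overline{w}$, whence
\[
\frac{w+z}{w-z}=\frac{1+z\overline{w}}{1-z\overline{w}}=1+2\sum_{n=1}^\infty (z\overline{w})^n=1+2\sum_{n=1}^\infty e^{-in\theta}z^n.
\]
Integrating this against $d\nu$ and using that the convergence is uniform on compact subsets of $\D$ (so that integral and sum may be interchanged), I obtain
\[
\log f_\nu(z)=-\int_\T\frac{w+z}{w-z}\,d\nu(w)=-\hat\nu(0)-2\sum_{n=1}^\infty \hat\nu(n)\,z^n.
\]
Thus the Maclaurin coefficient of $\log f_\nu$ of order $0$ equals $-\hat\nu(0)$, while the coefficient of order $n\ge1$ equals $-2\hat\nu(n)$.

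Next I would apply Lemma \ref{prop:psum_nu}: since $\nu$ is $\varphi$-smooth and $\varphi$ satisfies \eqref{eqn:varphi_p}, we have $\log f_\nu\in D^p_{p-1}$. Because $p\ge 2$, the embedding \eqref{eqn:emb_ge2} gives $\log f_\nu\in\ell^p_A$, that is,
\[
|\hat\nu(0)|^p+2^p\sum_{n=1}^\infty|\hat\nu(n)|^p=\|\log f_\nu\|_{p}^p<\infty.
\]
In particular $\sum_{n\ge0}|\hat\nu(n)|^p<\infty$. Finally, the reality of $\nu$ yields $|\hat\nu(-n)|=|\hat\nu(n)|$ for every $n$, so that
\[
\sum_{n\in\Z}|\hat\nu(n)|^p=|\hat\nu(0)|^p+2\sum_{n=1}^\infty|\hat\nu(n)|^p<\infty,
\]
which is the claim.

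The computation is routine and I do not anticipate a genuine obstacle; the only points requiring care are the bookkeeping of the coefficients (the factor $2$ coming from the Herglotz expansion and the relabeling of negative frequencies via the Hermitian symmetry) and the justification of interchanging the integral over $\T$ with the power-series summation, which is immediate from local uniform convergence on $\D$.
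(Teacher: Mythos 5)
Your proposal is correct and follows exactly the paper's route: reduce to $n\ge 0$ via the Hermitian symmetry $\hat\nu(-n)=\overline{\hat\nu(n)}$, identify the Maclaurin coefficients of $\log f_\nu$ with the $\hat\nu(n)$, and conclude via Lemma \ref{prop:psum_nu} and the embedding \eqref{eqn:emb_ge2}. In fact your explicit Herglotz-kernel expansion $\log f_\nu(z)=-\hat\nu(0)-2\sum_{n\ge1}\hat\nu(n)z^n$ is more careful than the paper's display \eqref{eqn:log_ellp}, which records the constant term and signs incorrectly (a harmless slip, since only the moduli of the coefficients matter).
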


In particular, if $p=2$ then the function $f_\nu$  must be outer.

\begin{proof}
Since $\nu$ is a positive measure, $\hat\nu(-n)=\overline{\hat\nu(n)}$ for all $n\in\Z$. Hence it suffices to show that $(\hat\nu(n))_{n\ge0}$ is in $\ell^p$. Let $f_\nu$ be defined as in \eqref{eqn:f_nu}. Thanks to Proposition \ref{prop:psum_nu}, $\log f_\nu$ is in $D^p_{p-1}$. By the embedding \eqref{eqn:emb_ge2}, 
\begin{equation}
\label{eqn:log_ellp}
\log f_\nu=1+2\sum_{n\ge1}\hat\nu(n)z^n
\end{equation}
is in $\ell^p_A$. 
\end{proof}

Notice that \eqref{eqn:varphi_p} is a weaker condition than the main hypothesis of Theorem \ref{theo:power}. The following question arises then naturally:

\begin{question}
\label{q:Besov}
    Let $f$ be an analytic function on $\D$ such that both $f$ and $\log f$ belong to $D^p_{p-1}$, $p>2$. Is $f$ cyclic in $D^p_{p-1}$?
\end{question}

\begin{comment}
Since $D^p_{p-1} \subset H^p$ for $1\le p\le 2$, if $f$ and $\log f$ are both elements of $D^p_{p-1}$ and $1\le p\le2$ we have, in particular, that $f$ is outer. In this way, Question \ref{q:Besov} is strongly tied to Question \ref{q:outer}. Moreover, the answer is evidently positive for both when $p=2$.
\end{comment}
The cyclicity of functions whose logarithm belongs to a given space has been studied in  various settings. For instance, the analog of Question \ref{q:Besov} has a positive answer when replacing $D^p_{p-1}$ with the Dirichlet space \cite{BCL+15, Aleman} or with Hardy spaces $H^p$ as a direct consequence of Theorem II.4.6 and Theorem II.7.4 in \cite{garnett}. For certain Bergman-type spaces this is evidently false: let $\alpha <0$ and consider the spaces $\ell^{2, \alpha}_A$ defined in \eqref{eqn:ell2alpha}. Recall that, in this setting, a singular inner function is cyclic exactly when the corresponding measure places no mass on any Beurling-Carleson set. This condition is not satisfied by the atomic singular inner function \[f(z) = e^{-\frac{1+z}{1-z}}.\] However, its logarithm is in all $\ell^{2,\alpha}_A$ spaces when $\alpha <-1$. The conditions studied in \cite{ivriinicolau} for membership of singular inner functions in Dirichlet-type spaces probably provide counterexamples to Question 3 for other Bergman spaces with values of $\alpha$ closer to 0: if a singular inner function $S_\mu$ belongs to $\ell^{2,\alpha}_A$ and to its Cauchy dual $\ell^{2,-\alpha}_A$, then $S_\mu$ can't be cyclic in either of these spaces since its backward shift $BS_\mu$ does not belong to the invariant subspace generated by $S_\mu$. 

To the best of our knowledge, Question \ref{q:Besov} remains open even for singular inner functions. On the other hand, the analog question for $\ell^p_A$ has a negative answer. 

\begin{theo}
\label{theo:log_notcyclic}
For all $p>2$, there exists a positive singular measure $\mu$ on $\T$ which is supported on a Beurling-Carleson set whose Fourier coefficients are in $\ell^p$. 
\end{theo}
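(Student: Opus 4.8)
The plan is to prove the theorem by a random Cantor (Salem-type) construction, tuned so that the support is automatically a Beurling--Carleson set while the random measure enjoys square-root-type Fourier decay. Fix $p>2$. Since $p>2$, the interval $(2/p,1)$ is nonempty, so I would choose $s\in(2/p,1)$ and build a random Cantor set $E=\bigcap_n E_n\subset\T$ of Hausdorff dimension $s$: let $E_n$ be a union of $2^n$ closed arcs of length $\ell_n:=2^{-n/s}$, where each arc of $E_{n-1}$ contains exactly two arcs of $E_n$, placed at independent, uniformly random positions inside their parent. Let $\mu$ be the natural probability measure on $E$, assigning mass $2^{-n}$ to each arc of generation $n$.

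First I would record the deterministic properties, valid for every realization. Since $|E_n|=2^n\ell_n=2^{\,n(1-1/s)}\to0$ (here $1-1/s<0$ because $s<1$), the set $E$ is closed with zero Lebesgue measure, and $\mu$ is a positive, singular probability measure carried by $E$. For the Beurling--Carleson condition, every complementary arc of generation $n$ has length at most $\ell_{n-1}$ and there are $\lesssim 2^n$ of them; since $x\mapsto x\log(1/x)$ is increasing near $0$,
\[
\sum_k|I_k|\log\frac1{|I_k|}\lesssim\sum_{n\ge1}2^n\,\ell_{n-1}\,\log\frac1{\ell_{n-1}}\lesssim\sum_{n\ge1}n\,2^{\,n(1-1/s)}<\infty,
\]
the series converging geometrically precisely because $s<1$. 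Thus, for every realization, $E$ is a Beurling--Carleson set.

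The analytic heart is the Fourier estimate, which is where the randomness enters and which I expect to be the main obstacle. Following Salem's method, the independence of the random placements decorrelates points of $E$ separated by $\gtrsim 1/|n|$: averaging $e^{-in(x-y)}$ over the independent translation at generation $k$ is negligible as soon as $\ell_k\gtrsim 1/|n|$, i.e. $k\lesssim s\log_2|n|$, so only pairs lying in a common arc of generation $\approx s\log_2|n|$ survive. A second-moment computation then gives $\mathbb{E}\,|\hat\mu(n)|^2\lesssim 2^{-s\log_2|n|}=|n|^{-s}$, reflecting that the correlation dimension of $\mu$ equals $s$. Because $\hat\mu(n)=\int e^{-in\theta}\,d\mu(\theta)$ is, through the cascade, an average of independent bounded contributions, its higher moments are controlled by its variance, $\mathbb{E}\,|\hat\mu(n)|^p\lesssim_p(\mathbb{E}\,|\hat\mu(n)|^2)^{p/2}\lesssim|n|^{-sp/2}$; equivalently, one may invoke the classical statement that $\mu$ is almost surely Salem, $|\hat\mu(n)|\lesssim|n|^{-s/2}\sqrt{\log|n|}$.

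Finally I would sum: since $s>2/p$ we have $sp/2>1$, so
\[
\mathbb{E}\sum_{n\in\Z}|\hat\mu(n)|^p\lesssim\sum_{n\ne0}|n|^{-sp/2}<\infty,
\]
whence $\sum_n|\hat\mu(n)|^p<\infty$ almost surely. Choosing any realization for which this holds, together with the (sure) Beurling--Carleson property, produces the desired measure. The points that must be filled in carefully are exactly the second-moment estimate $\mathbb{E}\,|\hat\mu(n)|^2\lesssim|n|^{-s}$ and the moment comparison (or, alternatively, the almost sure Salem bound): both are standard in the theory of random Cantor measures, but they require bookkeeping of how the random translations at scales $2^{-k/s}\gtrsim 1/|n|$ contribute to $\hat\mu(n)$. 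I note that this route is genuinely different from the $\varphi$-smoothness criterion of Corollary \ref{coro:Fourier_ellp}: a $\varphi$-smooth measure with $\varphi\to0$ spreads its mass too evenly to be carried by a closed set of measure zero, so the summability of the Fourier coefficients here cannot be obtained from smoothness of $\mu$ and must instead come from cancellation.
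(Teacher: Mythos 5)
Your route is genuinely different from the paper's, and the division of labor is inverted. The paper builds nothing new on the Fourier side: it quotes Salem's Theorem II verbatim to obtain, for any $\alpha\in(0,1)$ and $\varepsilon>0$, a singular measure on a perfect set of dimension $\alpha$ with $\hat\mu(n)=\mathcal{O}(|n|^{-\alpha/2+\varepsilon})$ as in \eqref{eqn:fourier_bigo}, picks $\alpha,\varepsilon$ with $p(\alpha/2-\varepsilon)>1$, and then spends all of its effort re-inspecting Salem's construction to check the Beurling--Carleson property: the dissection ratios $\xi_j$ are pinched between $\bigl(1-(j+1)^{-2}\bigr)\xi$ and $\xi$, the discarded gaps satisfy $\Gamma_j\lesssim (2d)^{-j}$, and hence the entropy sum in \eqref{eqn:bc} converges. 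You do the opposite: your homogeneous random Cantor set with prescribed lengths $\ell_n=2^{-n/s}$ makes the Beurling--Carleson property a two-line deterministic computation valid for \emph{every} realization --- a genuine simplification over the paper's inspection of Salem's set --- but in exchange you must produce the Fourier decay yourself rather than citing it. Your exponent bookkeeping ($s\in(2/p,1)$, summability from $sp/2>1$) matches the paper's exactly, and your deterministic parts (singularity, zero measure, the gap count $\lesssim 2^n$ with lengths $\le\ell_{n-1}$) are sound.

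The one step you cannot wave through is the passage from $\mathbb{E}\,|\hat\mu(n)|^2\lesssim|n|^{-s}$ to $\ell^p$-summability. The second moment alone is useless here: summing $\mathbb{E}\,|\hat\mu(n)|^2|n|^t$ only converges for $t<s-1$, which via Chebyshev yields a.s.\ decay of order $|n|^{-(s-1)/2}$ --- no decay at all, since $s<1$. And the asserted comparison $\mathbb{E}\,|\hat\mu(n)|^p\lesssim_p\bigl(\mathbb{E}\,|\hat\mu(n)|^2\bigr)^{p/2}$ is not a routine Khintchine-type fact for a multiplicative cascade: a naive Azuma/McDiarmid argument fails, because resampling a single coarse-generation translation can move $\hat\mu(n)$ by a constant, so worst-case martingale-difference bounds give only $O(1)$ concentration. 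Sub-Gaussian behavior of $\hat\mu(n)$ at scale $|n|^{-s/2}$ is precisely the hard content of the Salem-measure literature (Salem, Kahane, {\L}aba--Pramanik), proved there by explicit high-moment bookkeeping for randomizations slightly different from yours --- e.g.\ children chosen among \emph{disjoint} dyadic positions; note that your two independently uniform children can overlap, which those results exclude and which would also disturb your gap count. So the proposal is correct in outline provided you either align your randomization with a construction for which the a.s.\ bound $|\hat\mu(n)|\lesssim|n|^{-s/2}\sqrt{\log|n|}$ is actually on record, or carry out the $p$-th moment computation in full; as written, you are re-deriving by hand exactly the black box the paper sidesteps by quoting Salem, and that black box is the crux. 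Your closing remark is consistent with the paper's own logic: by Theorems \ref{theo:main} and \ref{theo:necessity}, measures satisfying \eqref{eqn:sufficient} charge no Beurling--Carleson set, so smoothness can never supply the example and cancellation must.
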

In particular, since $(\hat{\mu}(n))_{n\in\Z}$ is in $\ell^p$, then $\log S_\mu$ is in $\ell^p_A$ by \eqref{eqn:log_ellp}. On the other hand, $S_\mu$ is not cyclic in $\ell^p_A$ by Theorem \ref{theo:necessity}.
\begin{proof}[Proof of Theorem \ref{theo:log_notcyclic}]
Our claim follows by a construction of Salem, \cite[Th. II]{Salem51}, where it is shown that for all $\alpha\in (0, 1)$ and $\varepsilon>0$, there exists a positive singular measure $\mu$ supported on a perfect set $E$ of Hausdorff dimension $\alpha$ such that 
\begin{equation}
\label{eqn:fourier_bigo}
\hat{\mu}(n)=\mathcal{O}(1/n^{\frac{\alpha}{2}-\varepsilon}).
\end{equation}
Fixed $p>2$, for the Fourier coefficient of $\mu$ to lie in $\ell^p$, it suffices to choose $\alpha$ and $\varepsilon$ in the allowed range so that $p\left(\frac{\alpha}{2}-\varepsilon\right)>1$.

To conclude, we need to observe that a more detailed inspection of Salem's proof shows that $E$ is a Beurling-Carleson set. Indeed, $E$ is a Cantor-type set constructed as follows: two parameters $d \in \N$, $d\geq 2$, and $\xi$ so that $0<\xi<1/d$ are given, and they depend exclusively on $\alpha$ and $\varepsilon$. Then $E$ is the Cantor set constructed recursively as follows. $E_0=\T$ and for all interval $I$ of generation $j-1$ composing $E_{j-1}$ we pick $d$ sub-intervals whose length is equal to $\xi_j|I|$ and such that two consecutive left endpoints are at distance $\nu|I|$, where $\nu:=\frac{\xi+1/d}{2}$. The union of such sub-intervals composes $E_j$ and the set $E$ is defined as $\bigcap_jE_j$. Even if the sequence $(\xi_j)_j$ is not given explicitly, it is shown that there exists one choice for such sequence satisfying
\begin{equation}
\label{eqn:xi_n}
\left(1-\frac{1}{(j+1)^2}\right)\xi\le\xi_j\le\xi
\end{equation}
and a positive measure $\mu$ supported on $E$ satisfying \eqref{eqn:fourier_bigo} (in fact, this is the case for a generic sequence $(\xi_n)$ satisfying \eqref{eqn:xi_n}). Therefore, we only need to show that any possible choice satisfying \eqref{eqn:xi_n} provides a set $E$ that is a Beurling-Carleson set. At step $j$, the construction of the set $E$ discards $d^j$ intervals of length $\Gamma_j$, where, from \eqref{eqn:xi_n} we obtain
\[
\Gamma_j=\prod_{i=1}^j(\nu-\xi_i)\le \prod_{i=1}^j\left(\frac{\xi+1/d}{2}-\xi\left(1-\frac{1}{(i+1)^2}\right)\right).\]
After canceling the $\xi$ terms and taking common factor, this gives 
\[\Gamma_j \leq \left(\frac{1/d-\xi}{2}\right)^je^{\sum_{i=1}^j\log \left(1+\frac{\frac{2\xi}{1/d-\xi}}{(i+1)^2}\right)}.\]
Then, we can apply standard estimates on the logarithm function to arrive to
\[\Gamma_j \le \frac{1}{(2d)^j}e^{\left(\frac{\pi^2}{6}-1\right)\cdot \frac{2\xi}{1/d-\xi}} \le \frac{C_{\alpha, \varepsilon}}{(2d)^j}.
\]
Therefore $\sum_jd^j\Gamma_j\log\frac{1}{\Gamma_j}<\infty$, and $E$ is a Beurling-Carleson set thanks to \eqref{eqn:bc}. 

\end{proof}

\bibliographystyle{amsplain} 
\bibliography{bibliography}

\end{document}